\documentclass[10pt, reqno]{amsart}
\usepackage{amssymb}
\usepackage{hyperref}
\usepackage{cite}

\numberwithin{equation}{section}

\newtheorem{theorem}{Theorem}[section]

\newtheorem{lemma}[theorem]{Lemma}

\newtheorem{corollary}[theorem]{Corollary}

\newtheorem{proposition}[theorem]{Proposition}

\theoremstyle{definition}
\newtheorem{definition}[theorem]{Definition}

\newtheorem{remarks}[theorem]{Remarks}

\theoremstyle{remark}


\newcommand{\R}{\mathbb{R}}
\newcommand{\T}{\mathbb{T}}
\newcommand{\C}{\mathbb{C}}

\newcommand{\E}{\mathbb{E}}

\newcommand{\Z}{\mathbb{Z}}
\newcommand{\N}{\mathcal{N}}

\newcommand{\Prad}{P_{\text{rad}}}
\newcommand{\Imax}{I_{\text{max}}}
\newcommand{\x}{\langle x\rangle}

\renewcommand{\P}{\mathbb{P}}
\newcommand{\EE}{\mathbf{E}}

\let\Re=\undefined\DeclareMathOperator*{\Re}{Re}
\let\Im=\undefined\DeclareMathOperator*{\Im}{Im}

\newcommand{\eps}{\varepsilon}

\DeclareMathOperator{\rad}{rad}

\newcommand{\jb}{\langle\nabla\rangle}
\newcommand{\qtq}[1]{\quad\text{#1}\quad}

%

\begin{document}

\title[Almost sure scattering]{Almost sure scattering for the energy-critical NLS with radial data below $H^1(\R^4)$}

\author[R. Killip]{Rowan Killip}
\address{Department of Mathematics, UCLA, Los Angeles, USA}
\email{killip@math.ucla.edu}

\author[J. Murphy]{Jason Murphy}
\address{Department of Mathematics and Statistics, Missouri University of Science and Technology, Rolla, USA}
\email{jason.murphy@mst.edu}

\author[M. Visan]{Monica Visan}
\address{Department of Mathematics, UCLA, Los Angeles, USA}
\email{visan@math.ucla.edu}

\begin{abstract}
We prove almost sure global existence and scattering for the energy-critical nonlinear Schr\"odinger equation with randomized spherically symmetric initial data in $H^s(\R^4)$ with $\frac56<s<1$.  We were inspired to consider this problem by the recent work of Dodson--L\"uhrmann--Mendelson \cite{DLM}, which treated the analogous problem for the energy-critical wave equation.
\end{abstract}

\maketitle

\section{Introduction}\label{S:intro}

We consider the initial-value problem for the defocusing cubic nonlinear Schr\"odinger equation (NLS) in four space dimensions:
\begin{equation}\label{nls}
(i\partial_t + \Delta) u = |u|^2 u.
\end{equation}
This equation is {energy-critical} in four dimensions:  the rescaling that preserves the class of solutions to \eqref{nls}, namely,
\[
u(t,x)\mapsto \lambda u(\lambda^2 t,\lambda x),
\]
also leaves invariant the conserved {energy}, defined by
\begin{equation}\label{def:E}
E[u(t)] = \int_{\R^4}\tfrac12|\nabla u(t,x)|^2 + \tfrac14|u(t,x)|^4\,dx.
\end{equation}

Equation \eqref{nls} is known to be globally well-posed in the energy space.  More precisely, we have the following:

\begin{theorem}[Well-posedness in the energy space; \cite{RV, Visan4D}]\label{T:EC}\leavevmode\ %
Let $u_0\in \dot H^1(\R^4)$.  Then there exists a unique global solution $u\in C_t\dot H^1_x(\R\times\R^4)$ to \eqref{nls} with $u(0)=u_0$.  Moreover, the solution satisfies
$$
\|u\|_{L_t^4 L_x^8(\R\times\R^4)} \leq L(E(u_0)).
$$
Consequently, there exist scattering states $u_\pm\in \dot H^1(\R^4)$ such that
$$
\|u(t)-e^{it\Delta}u_\pm\|_{\dot H^1_x}\to 0\qtq{as} t\to \pm\infty.
$$
\end{theorem}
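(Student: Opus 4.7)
The plan is to follow the now-standard Kenig--Merle concentration-compactness/rigidity roadmap, reducing the global spacetime bound to the non-existence of a minimal non-scattering solution, and then ruling out such a solution via a Morawetz-type monotonicity estimate.

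First I would set up the local theory. Using the dispersive estimate for $e^{it\Delta}$ and the associated Strichartz estimates in the admissible pair $(L^4_t L^8_x, L^{4/3}_t L^{8/7}_x)$ on $\R\times\R^4$, a contraction mapping argument in $L^4_t \dot W^{1,8/3}_x$ (or equivalently in $L^4_t L^8_x$ at the level of $\nabla u$) yields local well-posedness in $\dot H^1_x$ as well as small-data scattering: there exists $\eta_0>0$ such that if $\|e^{it\Delta}u_0\|_{L^4_t L^8_x}<\eta_0$, then the solution is global and scatters with $\|u\|_{L^4_t L^8_x}\lesssim \|u_0\|_{\dot H^1}$. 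In conjunction with the stability theory (perturbation lemma for approximate solutions to \eqref{nls}), this reduces the theorem to the a priori spacetime bound $\|u\|_{L^4_tL^8_x}\le L(E(u_0))$, which in turn implies scattering by a standard argument using Strichartz and the integral equation.

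Next I would run the induction on energy. Suppose for contradiction that $L(E_0):=\sup\{\|u\|_{L^4_tL^8_x} : E(u)\le E_0\}=+\infty$ at some critical energy $E_c\le E(u_0)$. Applying Keraani's linear profile decomposition to a sequence of solutions $u_n$ with $E(u_n)\to E_c$ and $\|u_n\|_{L^4_tL^8_x}\to\infty$, together with the nonlinear profile decomposition built on the perturbation lemma, one extracts a minimal counterexample $u_c$: a global-in-time (or maximally defined) solution with $E(u_c)=E_c$ whose orbit $\{u_c(t)\}$ is precompact in $\dot H^1$ modulo the symmetries of the equation (translation and scaling). A further reduction to soliton-like, self-similar, or low-to-high frequency-cascade scenarios can be carried out using the additional structure afforded by this almost periodicity.

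The rigidity step is the main obstacle and is where the four-dimensional structure is crucially exploited. The idea is to use a suitable Morawetz or interaction Morawetz monotonicity formula, typically with weight $|x|$ truncated at a scale adapted to the compactness modulus of $u_c$, to obtain an estimate of the form
\[
\int_I\int_{\R^4}\frac{|u_c(t,x)|^4}{|x|}\,dx\,dt\lesssim E(u_c)\cdot R(I),
\]
while almost periodicity forces the left-hand side to grow faster than the right, producing a contradiction. Ruling out each of the remaining scenarios (finite-time blowup, self-similar, soliton, quasi-soliton) by combining this Morawetz control with reductions coming from $\dot H^1$-compactness completes the proof. Finally, once the uniform spacetime bound is established, scattering in $\dot H^1$ follows from the standard Duhamel/Strichartz argument, giving the states $u_\pm$.
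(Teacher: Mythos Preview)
The paper does not prove Theorem~\ref{T:EC}; it is quoted as an established result with citations to \cite{RV, Visan4D} and used as a black box throughout (most notably in Proposition~\ref{P:v-scatter}). So there is no proof here to compare your proposal against.

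That said, your outline is a reasonable sketch of one valid route, essentially the Kenig--Merle concentration-compactness/rigidity scheme, which is close in spirit to \cite{Visan4D}. A couple of points worth flagging. First, the original proof in \cite{RV} does \emph{not} proceed this way: it follows the Bourgain/CKSTT induction-on-energy argument, with a frequency-localized interaction Morawetz estimate replacing the compactness extraction. Second, your scenario reduction (``soliton-like, self-similar, or low-to-high frequency-cascade'') is the trichotomy associated with the \emph{mass}-critical problem; in the energy-critical defocusing setting one works directly with an almost-periodic-modulo-symmetries solution and rules it out via the (localized) Lin--Strauss Morawetz inequality combined with $\dot H^1$-compactness, without needing the self-similar case. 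These are minor inaccuracies in an otherwise correct high-level plan, but since the paper simply imports the result, no proof is expected here.
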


On the other hand, Christ--Colliander--Tao \cite{CCT} showed that the data-to-solution map for \eqref{nls} is discontinuous at the origin in the $H^s(\R^4)$ topology whenever $s<1$.  In this paper, we prove that suitably randomized spherically symmetric initial data in $H^{s}(\R^4)$ with $\frac56<s<1$ lead to global scattering solutions almost surely.

\begin{definition}[Randomization]\label{def:random} Let $\varphi$ be a bump function supported in the unit ball such that
\[
\sum_{k\in\Z^4} \varphi_k(\xi)= 1\qtq{for all $\xi\in \R^4$, where} \varphi_k(\xi) := \varphi(\xi-k).
\]
Fix $s\in \R$ and $f\in H^s(\R^4)$.  For $k\in\Z^4$, we define
$$
f_k:= [\hat f \varphi_k]^{\vee} = f* \check{\varphi}_k.
$$
Let $\{X_k\}_{k\in\Z^4}$ be independent, mean zero, real or complex Gaussian random variables of uniformly bounded variance.  We will write the underlying probability space as $(\Omega,\Sigma,\P)$.  We define the randomization of $f$ via
\[
f^\omega(x) = \sum_{k\in\Z^4} X_k f_k(x).
\]
\end{definition}

For concreteness, in this paper we work with the Gaussian randomization introduced above.  Use of Kinchine's inequality would allow one to treat more general randomizations, such as those satisfying
$$
\E(e^{\gamma X_k})\leq e^{c\gamma^2}
$$
uniformly for $\gamma\in \R$, $k\in \Z^4$, for some $c>0$.

\begin{remarks}\label{Remarks}
(i) Note that $m(i\nabla)f^\omega=[m(i\nabla)f]^\omega$ for any Fourier multiplier operator $m$.  We also have $(f+g)^\omega = f^\omega + g^\omega$. \\[2mm]
(ii) For any $s\in \R$ we have $\E(\|f^\omega\|_{H^s}^2)\sim \|f\|_{H^s}^2$.\\[2mm]
(iii) Even if the function $f$ is radial, the randomization $f^\omega$ is not.
\end{remarks}

Our main result is the following:

\begin{theorem}[Almost sure scattering]\label{T} Fix $\frac56<s<1$ and a spherically symmetric function $f\in H^s(\R^4)$.  For almost every $\omega$, there exists a unique global solution $u$ to \eqref{nls} with $u(0)=f^\omega$.  Furthermore, $u$ scatters in the following sense: there exist unique $u_\pm\in H^1(\R^4)$ such that
\[
\lim_{t\to\pm\infty} \|u(t) - e^{it\Delta}[f^\omega + u_\pm]\|_{H^1(\R^4)} = 0.
\]
\end{theorem}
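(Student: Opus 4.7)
The plan is to use the now-standard Bourgain decomposition: write the solution as $u = F + v$, where $F := e^{it\Delta}f^\omega$ is the random linear evolution (defined globally) and $v := u - F$ solves the forced defocusing cubic NLS
\begin{equation*}
(i\partial_t + \Delta)v = |F+v|^2(F+v), \qquad v(0) = 0.
\end{equation*}
The point is that randomization buys extra integrability and smoothness for $F$ compared to what $H^s$ alone provides, while $v$ we hope to place in the energy space $\dot H^1$, where we can bring Theorem~\ref{T:EC} and its associated stability theory to bear. The $H^1$-scattering statement for $u - e^{it\Delta}f^\omega$ in the theorem will then follow from genuine $\dot H^1$-scattering of $v$ (plus trivial bookkeeping for the $L^2$ part).

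The first ingredient is a suite of probabilistic Strichartz estimates for $F$. Using Gaussian large-deviation bounds together with Littlewood--Paley theory and the deterministic $L^2$-based Strichartz inequalities, one shows that almost surely $F$ belongs to $L^q_t L^r_x$-spaces at regularities strictly above $s$, including some endpoint-type spaces used in the energy-critical theory. Because $f$ is spherically symmetric, each Littlewood--Paley piece of $f$ enjoys the improved radial Sobolev/Strichartz inequalities (weighted-in-$|x|$ gains), and these gains survive the randomization. It is precisely the marriage of the probabilistic gain in derivatives with the radial gain in spatial integrability that should push the threshold down to $s>\tfrac56$.

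The heart of the argument is then a global-in-time $\dot H^1$ bound for $v$. I would differentiate the NLS energy $E[v]$, obtaining
\begin{equation*}
\tfrac{d}{dt}E[v](t) = \Re \int_{\R^4} \overline{\partial_t v}\,\bigl\{ |F+v|^2(F+v) - |v|^2 v\bigr\}\,dx,
\end{equation*}
and expand the difference on the right into multilinear forms of type $F^3$, $F^2 v$, $F v^2$ (paired with $\Delta v$ or $|v|^2 v$ from $\partial_t v$). Each of these is estimated by H\"older, Sobolev, and the probabilistic/radial Strichartz bounds from the previous step, trading factors of $F$ against factors of $v$ so as to close on a Gronwall-type inequality $\frac{d}{dt}E[v] \lesssim (1+E[v])\cdot (\text{small probabilistic factor})$. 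This controls $E[v]$ on any finite time interval and, once combined with global Strichartz estimates on $F$, on $[0,\infty)$. Using the resulting $\dot H^1$ bound together with the stability theory for \eqref{nls} (viewing $v$ as a perturbation of an honest energy-critical NLS solution, with error terms driven by $F$), one upgrades to global Strichartz control of $v$ and hence to scattering in $\dot H^1$; linear $H^1$ scattering for the Gaussian piece then yields the statement as written.

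The main obstacle is the energy estimate in the middle step: among the forcing terms the trilinear piece $F v^2$ (paired with $\nabla v$) and the bilinear piece $F^2 v$ are critical, and they must be tamed using the radial probabilistic gains in a way that tightly matches the Strichartz exponents required by the energy-critical scale. The numerological constraint $s > \tfrac56$ is almost certainly dictated here, as it is the regularity level below which the $F^2 v$-type interaction ceases to be integrable in time against the energy norm of $v$, even after all available radial and probabilistic improvements have been exploited. Everything else (scattering, uniqueness, upgrading to $H^1$) is then a fairly standard consequence of the deterministic well-posedness theory recalled in Theorem~\ref{T:EC} and the perturbation lemma that accompanies it.
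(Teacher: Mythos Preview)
Your overall architecture is right: the Bourgain decomposition $u = F+v$, probabilistic Strichartz bounds on $F$ exploiting the radial structure, then an $H^1$ bound on $v$ fed into the energy-critical stability theory. But the heart of the argument, the energy bound on $v$, cannot be closed by a Gronwall inequality as you propose. Differentiating $E[v]$ produces, among other things, the term $\int_{\R^4} v^2\,\nabla F\cdot\nabla\bar v\,dx$. Using only the energy, the best pointwise-in-time control you have is $\|v\|_{L^4_x}^2\|\nabla v\|_{L^2_x}\lesssim E[v]^{3/2}$, so you would need $\nabla F\in L^1_t L^\infty_x$ to integrate in time; the probabilistic/radial estimates (local smoothing plus weighted radial Strichartz) only place $\nabla F$ in $L^{2+}_t L^\infty_x$. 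Thus the ``small probabilistic factor'' in your Gronwall is not integrable, and the argument does not close. You also cannot borrow time integrability from Strichartz norms of $v$, since at the critical scaling those are only controlled \emph{after} you have the energy bound.

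The paper supplies exactly this missing time integrability via a Morawetz estimate with the convex weight $a(x)=\langle x\rangle$. The extra convexity yields a coercive term $\|\langle x\rangle^{-3/2}\nabla v\|_{L^2_{t,x}}^2$ in addition to the usual $\|\langle x\rangle^{-1/4}v\|_{L^4_{t,x}}^4$. Interpolating the first with $\|\nabla v\|_{L^\infty_t L^2_x}$ puts $\nabla v$ in a weighted $L^p_t L^2_x$ space, which pairs with the weighted $L^{2p/(p-2)}_t L^\infty_x$ control on $\nabla F$; the spatial weights match because of the radial Strichartz gain. The actual proof is then a \emph{double bootstrap}: the Morawetz quantity $A(T)$ is bounded in terms of $\EE(T)$ plus small errors, and the energy increment is bounded in terms of $A(T)$ plus small errors, and one closes both simultaneously on subintervals where $\|F\|_X$ is small. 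Incidentally, the threshold $s>\tfrac56$ does not come from the $F^2v$ interaction; it arises in proving $\langle x\rangle^{1/2+}\nabla F\in L^{2+}_t L^\infty_x$ almost surely, where one interpolates local smoothing (gaining half a derivative) against the weighted radial Strichartz estimate, and the resulting exponent is $\tfrac56+$.
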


Uniqueness in Theorem~\ref{T} holds in the following sense: Writing $u(t) = e^{it\Delta}f^\omega + v(t)$, there exists a unique global solution $v\in C_tH^1_x(\R\times\R^4)\cap L_t^4L_x^8 (\R\times\R^4)$ to
\begin{align*}
(i\partial_t+\Delta) v = |v+e^{it\Delta}f^\omega|^2(v+e^{it\Delta}f^\omega) \qtq{with} v(0)=0.
\end{align*}

A substantial body of work on dispersive equations with randomized initial data has built up over the last two decades.  Correspondingly, we must curtail our presentation here and primarily discuss works concerned with the energy-critical wave and Schr\"odinger problems on Euclidean space.  See also \cite{NS} for a proof of almost sure well-posedness for the energy-critical NLS on $\T^3$. 

Almost sure global well-posedness for supercritical data, randomized as in Definition~\ref{def:random}, was proved by Pocovnicu \cite{Pocovnicu} and Oh--Pocovnicu \cite{Oh-Pocovnicu} for the energy-critical wave equation and by Benyi--Oh--Pocovnicu \cite{Benyi-Oh-Pocovnicu} and Brereton \cite{Brereton} for the energy-critical Schr\"odinger equation.  These works also establish scattering with positive probability for small randomized data.  We should note that the results in \cite{Benyi-Oh-Pocovnicu, Brereton} are conditional on energy-critical bounds satisfied by the function $v$ introduced above.  In \cite{DLM}, Dodson--L\"uhrmann--Mendelson proved almost sure scattering for the four-dimensional energy-critical wave equation with (large) supercritical radial data, randomized as in Definition~\ref{def:random}.  In this paper we establish the analogous result for the energy-critical Schr\"odinger equation, Theorem~\ref{T}; in particular, our global well-posedness result is not conditional on bounds satisfied by $v$.

Many prior works considered energy-critical and -subcritical problems on Euclidean space, mostly with different randomizations; see, for example, \cite{BTT, Deng, Suzzoni1, Suzzoni2, LM1, LM2, Murphy, Poiret1, Poiret2, PRT, Thomann}.  We wish to draw particular attention to \cite[Theorem~1.3]{PRT}, which establishes scattering for the energy-critical Schr\"odinger equation with positive probability for a particular ensemble of random initial data which is merely $L^2_x$.

The proof of Theorem~\ref{T} relies on the further development of the methods introduced in the papers described above, particularly \cite{DLM, Oh-Pocovnicu, Pocovnicu}. The first step is to  regard the equation satisfied by $v$ as a perturbation of the energy-critical problem.  Specifically, we write
\begin{align}\label{v eqn}
(i\partial_t+\Delta) v = |v|^2v + \bigl[|v+e^{it\Delta}f^\omega|^2(v+e^{it\Delta}f^\omega)-|v|^2v\bigr].
\end{align}
The fact that the stability theory for the energy-critical NLS is the right tool to study energy-critical equations with perturbations was first observed by X. Zhang in \cite{Zhang} and elaborated on in \cite{Matador}.  The utility of this approach in the energy-critical random-data setting was first observed by O. Pocovnicu in \cite{Pocovnicu}.

Relying on Theorem~\ref{T:EC}, we develop a stability theory (along pre-existing lines) tailored to equation \eqref{v eqn}.  This allows us to show that there exists a unique global solution $v$ to \eqref{v eqn} that scatters in $H^1_x$, provided we can verify two conditions: (1) $v$ satisfies uniform energy bounds on its lifespan and (2) the error $|v+e^{it\Delta}f^\omega|^2(v+e^{it\Delta}f^\omega)-|v|^2v$ is controlled in suitable scaling-critical spaces.  As we will see in Section~\ref{S:wp}, the second condition above is satisfied as long as the forcing term $e^{it\Delta} f^\omega$ obeys certain spacetime bounds.  Thus, building on the stability result we develop for \eqref{v eqn} (see Lemma~\ref{L:stab2}), we show in Proposition~\ref{P:v-scatter} that the proof of Theorem~\ref{T} reduces to demonstrating uniform energy bounds for $v$ on its lifespan and certain spacetime bounds for the free evolution of the randomized data.

In Section~\ref{S:E}, we show that if the forcing term $e^{it\Delta} f^\omega$ obeys some further spacetime bounds (see \eqref{X} and \eqref{3:30}), then $v$ is uniformly bounded in $H^1_x$ on its lifespan.  To achieve this, we run a double bootstrap argument relying on an estimate on the energy increment of $v$ (see Lemma~\ref{P:energy}) and a Morawetz-type inequality  (see Lemma~\ref{P:Morawetz}).  Instead of the standard Lin--Strauss Morawetz weight $a(x)=|x|$, we prove an estimate based on the weight $a(x)=\langle x\rangle$.  The additional convexity of this weight gains us much-needed time integrability for $\nabla v$, albeit in weighted spaces.

In Section~\ref{S:notation}, we prove that for spherically symmetric $f\in H^{s}(\R^4)$ with $s>\frac56$, the random free evolution $e^{it\Delta} f^\omega$ almost surely obeys the spacetime bounds needed to run all the arguments described above (see Proposition~\ref{P:STE2} and Proposition~\ref{P:STE}).  The key ingredients here are weighted radial Strichartz estimates (see Proposition~\ref{P:RS}) and the local smoothing estimate (see Lemma~\ref{L:LS}), combined with the moment bounds in Lemma~\ref{L:SF}.

\subsection*{Acknowledgements}  R. K. was supported by NSF grant DMS-1600942.  J. M. was supported in part by NSF DMS-1400706.  M. V. was supported by NSF grant DMS-1500707.

\section{Notation and useful lemmas}\label{S:notation}
We write $A\lesssim B$ to indicate that $A\leq CB$ for some constant $C>0$. Dependence of implicit constants on various parameters will be indicated with subscripts. For example $A\lesssim_\varphi B$ means that $A\leq CB$ for some $C=C(\varphi)$.  Implicit constants will always be permitted to depend on the parameters in the randomization.  We write $A\sim B$ if $A\lesssim B$ and $B\lesssim A$. We write $A\ll B$ if $A\leq cB$ for some small $c>0$.

We write $L_x^r$, $H_x^s$, and $W_x^{s,r}$ for the usual Lebesgue and Sobolev spaces.  We also use mixed space-time norms, e.g. $L_t^q L_x^r$ and $L_t^q W_x^{s,r}$.  We write $H^s_{\rad}$ to denote the space of spherically symmetric functions in $H_x^s$. 

We use the standard Littlewood--Paley projection operators $P_N$ with the understanding that $P_1$ denotes the operator $P_{\leq 1}$.  Summation in $N$ will always be taken over $N\in 2^{\mathbb{N}}=\{1,2,4, \ldots\}$.  The Littlewood--Paley operators obey the following well-known Bernstein estimates:

\begin{lemma}[Bernstein estimates] For $1\leq r\leq q\leq\infty$ and $s\geq 0$ we have
\begin{align*}
\|\vert\nabla\vert^s P_Nu\|_{L_x^r(\R^d)}&\lesssim N^s \|P_N u\|_{L_x^r(\R^d)}\\
\|P_Nu\|_{L_x^q(\R^d)}&\lesssim N^{\frac dr-\frac dq} \|P_Nu\|_{L_x^r(\R^d)}.
\end{align*} 	
\end{lemma}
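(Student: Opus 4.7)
The plan is to prove both Bernstein estimates by expressing $P_N$ (and $|\nabla|^s P_N$) as convolution operators with explicit Schwartz kernels and then applying Young's convolution inequality. Fix a smooth bump $\psi$ on $\R^d$ with $\psi = 1$ on the Fourier support of the Littlewood--Paley profile defining $P_N$, so that $P_N = P_N \tilde P_N$ where $\tilde P_N$ has multiplier $\psi(\xi/N)$. For $N \geq 2$ one may assume $\psi$ is supported in an annulus $|\xi| \sim 1$; for $N = 1$ (recall the convention $P_1 = P_{\leq 1}$) the bump $\psi$ is supported in a ball. In either case, the convolution kernel of $\tilde P_N$ is $K_N(x) = N^d K_1(Nx)$ with $K_1$ a fixed Schwartz function.

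For the second estimate, I would apply Young's inequality with $\tfrac{1}{q} + 1 = \tfrac{1}{p} + \tfrac{1}{r}$ to obtain
\begin{equation*}
\|P_N u\|_{L^q_x} = \|K_N * P_N u\|_{L^q_x} \leq \|K_N\|_{L^p_x} \|P_N u\|_{L^r_x}.
\end{equation*}
A change of variables gives $\|K_N\|_{L^p_x} = N^{d(1-1/p)} \|K_1\|_{L^p_x} \sim N^{d/r - d/q}$, which yields the stated bound (the hypothesis $1 \leq r \leq q$ ensures $1 \leq p \leq \infty$ so that Young applies and $\|K_1\|_{L^p_x} < \infty$).

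For the first estimate, I would write $|\nabla|^s P_N u = m_N(\nabla) P_N u$ where $m_N(\xi) = |\xi|^s \psi(\xi/N)$. By the rescaling $\xi \mapsto N\xi$, the inverse Fourier transform of $m_N$ equals $N^{d+s}(m_1)^\vee(Nx)$, and in particular $\|(m_N)^\vee\|_{L^1_x} = N^s \|(m_1)^\vee\|_{L^1_x}$. Since $m_1(\xi) = |\xi|^s \psi(\xi)$ is smooth (even at $\xi = 0$ when $\psi$ is supported away from the origin, which is the only place where $|\xi|^s$ fails to be smooth; for the low-frequency case $N=1$ the factor $|\xi|^s$ is harmless as $s \geq 0$ makes the product continuous and $\psi$ is compactly supported, so one inserts a harmless extra Littlewood--Paley decomposition or simply notes $(m_1)^\vee \in L^1$ directly) and compactly supported, its inverse Fourier transform is Schwartz, and hence lies in $L^1$. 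Another application of Young's inequality then gives $\||\nabla|^s P_N u\|_{L^r_x} \lesssim N^s \|P_N u\|_{L^r_x}$.

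The only mild subtlety is the $N = 1$ case of the derivative estimate, where $|\xi|^s$ is not smooth at the origin; this is handled by noting that $s \geq 0$ and $\psi$ is compactly supported, so $|\xi|^s \psi(\xi)$ still has an $L^1$ inverse Fourier transform (for instance, by an explicit computation or by writing $|\xi|^s \psi(\xi) = (|\xi|^s e^{-|\xi|^2}) \cdot (e^{|\xi|^2}\psi(\xi))$ and bounding the two Schwartz factors). No deeper step is required since these are essentially routine consequences of $L^1$-boundedness of the kernels combined with Young's inequality.
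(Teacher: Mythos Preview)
The paper states this lemma as ``well-known'' and offers no proof, so there is nothing to compare against; your argument via Young's inequality and kernel rescaling is the standard one and is correct in outline and in essentially all details.

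One small caveat: in the $N=1$ case your suggested factorization $|\xi|^s\psi(\xi) = (|\xi|^s e^{-|\xi|^2})\cdot(e^{|\xi|^2}\psi(\xi))$ does not quite do what you claim, since $|\xi|^s e^{-|\xi|^2}$ is not smooth at the origin for non-integer $s$ and hence is not Schwartz. Your other suggestion---inserting a further Littlewood--Paley decomposition $P_{\leq 1}=\sum_{M\leq 1}P_M$, applying the annular estimate to each $P_M$, and summing the resulting $M^s$ over dyadic $M\leq 1$---works cleanly for $s>0$ (and $s=0$ is trivial), so the argument is complete once you commit to that route.
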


Next, we record two simple weighted estimates.

\begin{lemma}\label{L:WY} For $1\leq r\leq m\leq\infty$, $\beta>0$, and $\phi\in \mathcal S(\R^d)$,
\[
\|\x^\beta[|\phi|\ast |u|]\|_{L_x^m(\R^d)} \lesssim \|\x^\beta u\|_{L_x^r(\R^d)}.
\]
\end{lemma}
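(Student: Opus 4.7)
The plan is to absorb the weight $\langle x \rangle^\beta$ onto the two factors in the convolution using a Peetre-type inequality, then apply Young's convolution inequality. Specifically, from $\langle x \rangle \leq 1 + |x-y| + |y| \leq 2\langle x-y\rangle \langle y\rangle$ we get the pointwise bound
\[
\langle x\rangle^\beta \lesssim \langle x-y\rangle^\beta \langle y\rangle^\beta \qquad \text{for all } x,y\in\R^d.
\]
This allows us to move one copy of the weight onto the kernel and one copy onto $u$:
\[
\langle x\rangle^\beta \bigl[|\phi|\ast|u|\bigr](x) \lesssim \int_{\R^d} \langle x-y\rangle^\beta |\phi(x-y)|\cdot \langle y\rangle^\beta |u(y)|\,dy = \bigl[\tilde\phi \ast (\langle\cdot\rangle^\beta |u|)\bigr](x),
\]
where $\tilde\phi(z) := \langle z\rangle^\beta |\phi(z)|$.

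Next, since $\phi\in\mathcal S(\R^d)$, the function $\tilde\phi$ remains rapidly decreasing and therefore lies in $L^p(\R^d)$ for every $1\leq p\leq\infty$. Choosing $p\in[1,\infty]$ so that Young's convolution inequality $L^p \ast L^r \hookrightarrow L^m$ applies, namely
\[
\tfrac{1}{p} = 1 - \tfrac{1}{r} + \tfrac{1}{m},
\]
we need only check that such a $p$ is admissible. The hypothesis $r\leq m$ gives $\tfrac{1}{r}-\tfrac{1}{m}\geq 0$, so $\tfrac{1}{p}\leq 1$, i.e.\ $p\geq 1$; clearly $\tfrac{1}{p}\geq 0$ as well. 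Hence Young's inequality yields
\[
\|\langle x\rangle^\beta[|\phi|\ast|u|]\|_{L_x^m} \lesssim \|\tilde\phi\|_{L_x^p} \|\langle x\rangle^\beta u\|_{L_x^r} \lesssim_\phi \|\langle x\rangle^\beta u\|_{L_x^r},
\]
which is the desired estimate.

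No step is particularly hard here; the only thing to be careful about is verifying that the exponent $p$ produced by Young's scaling relation is genuinely in $[1,\infty]$, which is exactly where the hypothesis $r\leq m$ enters. The Schwartz hypothesis on $\phi$ is used only to guarantee that $\langle z\rangle^\beta|\phi(z)|$ has finite $L^p$ norm for this $p$; in fact the argument works whenever $\phi$ has polynomial decay of order greater than $\beta + d/p'$.
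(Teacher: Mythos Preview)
Your proof is correct and takes a cleaner route than the paper's. The paper decomposes the convolution integral into the region $|x-y|\leq 1$ and dyadic shells $|x-y|\sim R$, then on each piece uses H\"older's inequality in $y$, Minkowski's integral inequality to swap the $L_x^m L_y^r$ norms, and finally sums over $R$ using the rapid decay of $\phi$. Your argument bypasses this shell decomposition entirely: the Peetre-type bound $\langle x\rangle^\beta \lesssim \langle x-y\rangle^\beta \langle y\rangle^\beta$ moves the weight onto the two factors in one step, after which a single application of Young's convolution inequality finishes the job. Both rely on the same hypotheses---the Schwartz decay of $\phi$ to absorb $\langle z\rangle^\beta$ into the kernel, and $r\leq m$ to make the Young exponent admissible---but your approach is more direct and makes the role of each hypothesis more transparent.
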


\begin{proof} Using the rapid decay of $\phi$, the triangle inequality, H\"older's inequality, and Minkowski's integral inequality, we estimate for any $A>0$,
\begin{align*}
\|\x^\beta[|\phi|\ast|u|]\|_{L_x^m} & \lesssim \biggl\| \int_{|x-y|\leq 1}\langle y\rangle^\beta |u(y)|\,dy\biggr\|_{L_x^m} +
\sum_{R\geq1} \biggl\|\int_{|x-y|\sim R}\frac{\x^\beta |u(y)|}{\langle x-y\rangle^{A}} dy\biggr\|_{L_x^m} \\
& \lesssim \|\langle y\rangle^\beta\chi_{|x-y|\leq 1}u\|_{L_x^m L_y^r} + \sum_{R\geq 1} R^{-A+\beta+\frac{d}{r'}}\|\langle y\rangle^\beta \chi_{|x-y|\sim R}u\|_{L_x^m L_y^r}  \\
&\lesssim \|\langle y\rangle^\beta\chi_{|x-y|\leq 1}u\|_{L_y^r L_x^m} + \sum_{R\geq 1} R^{-A+\beta+\frac{d}{r'}}\|\langle y\rangle^\beta \chi_{|x-y|\sim R}u\|_{L_y^r L_x^m} \\
& \lesssim \|\langle y\rangle^\beta u\|_{L_y^r} + \sum_{R\geq 1} R^{-A+\beta+\frac{d}{r'}+\frac{d}{m}}\|\langle y\rangle^\beta u\|_{L_y^r}.
\end{align*}
For $A$ large enough, we can sum over $R\in 2^{\mathbb{N}}$ to complete the proof.
\end{proof}

\begin{lemma}\label{L:commutator} For $0\leq\beta\leq 1$ and $d<m<\infty$,
\[
\|\x^\beta u\|_{L_x^\infty(\R^d)}\lesssim \sum_{N\geq 1} N^{\frac{d}{m}}\|\x^\beta P_N u\|_{L_x^m(\R^d)}.
\]
\end{lemma}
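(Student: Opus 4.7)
My plan is to reduce the claim to a per-frequency, weighted Bernstein-type estimate. Starting from the Littlewood--Paley decomposition, the triangle inequality gives
\[
\|\x^\beta u\|_{L_x^\infty} \leq \sum_{N\geq 1} \|\x^\beta P_N u\|_{L_x^\infty},
\]
so it suffices to prove $\|\x^\beta P_N u\|_{L_x^\infty} \lesssim N^{d/m}\|\x^\beta P_N u\|_{L_x^m}$ uniformly in $N\geq 1$. The essential difficulty is that $\x^\beta P_N u$ is \emph{not} frequency-localized, so one cannot invoke the standard Bernstein estimate directly; the lemma is in fact a statement that the commutator between the weight and $P_N$ is harmless.

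To handle this, I would introduce a fattened Littlewood--Paley multiplier $\tilde{P}_N$ whose symbol equals one on the Fourier support of $P_N$, so that $P_N u = \tilde{P}_N P_N u$, and whose convolution kernel satisfies $\tilde{K}_N(z) = N^{d}\tilde{K}(Nz)$ for some $\tilde{K}\in\mathcal{S}(\R^d)$. Writing $P_N u(x) = \int \tilde{K}_N(x-y)P_N u(y)\,dy$ and invoking the elementary pointwise inequality $\x^\beta \lesssim \langle y\rangle^\beta \langle x-y\rangle^\beta$ (valid since $0\leq\beta\leq 1$), one obtains
\[
|\x^\beta P_N u(x)| \lesssim \int \bigl|\tilde{K}_N(x-y)\bigr|\,\langle x-y\rangle^\beta\,\langle y\rangle^\beta |P_N u(y)|\,dy.
\]
H\"older's inequality bounds the right-hand side by $\|\phi_N\|_{L^{m'}}\,\|\langle y\rangle^\beta P_N u\|_{L^m}$, where $\phi_N(z):=|\tilde{K}_N(z)|\langle z\rangle^\beta$ and $\tfrac{1}{m}+\tfrac{1}{m'}=1$.

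A straightforward rescaling $w=Nz$, using the Schwartz decay of $\tilde{K}$ together with the observation that $\langle z\rangle^\beta = \langle w/N\rangle^\beta \lesssim \langle w\rangle^\beta$ for $N\geq 1$, produces $\|\phi_N\|_{L^{m'}} \lesssim N^{d-d/m'} = N^{d/m}$. This yields the desired per-frequency bound, and summing over dyadic $N\geq 1$ closes the argument. The main, and really the only, non-routine step is the pointwise weight transfer $\x^\beta \lesssim \langle y\rangle^\beta \langle x-y\rangle^\beta$, which is precisely where the hypothesis $\beta\leq 1$ enters; once this is in hand, the rapid decay of $\tilde{K}_N$ absorbs the extra factor $\langle x-y\rangle^\beta$ and the rest is scaling bookkeeping, with the condition $m<\infty$ used only to ensure $m'>1$.
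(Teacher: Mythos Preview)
Your proof is correct, but it differs from the paper's argument in an interesting way. The paper first applies the standard Bernstein inequality to the genuinely frequency-localized function $P_N[\x^\beta u]$, obtaining
\[
\|\x^\beta u\|_{L_x^\infty}\lesssim \sum_{N\geq 1} N^{d/m}\|P_N[\x^\beta u]\|_{L_x^m},
\]
and then controls the difference $P_N[\x^\beta u]-\x^\beta P_N u=[P_N,\x^\beta]u$ via the Schur-test commutator bound $\|[a,P_N]\|_{L^m\to L^m}\lesssim N^{-1}\|\partial a\|_{L^\infty}$. This is where the paper uses $\beta\leq 1$ (so that $\|\partial(\x^\beta)\|_{L^\infty}<\infty$) and $d<m$ (so that $\sum_{N\geq 1}N^{d/m-1}<\infty$). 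Your route bypasses the commutator entirely by proving a weighted Bernstein inequality directly via the kernel representation and Peetre's inequality.

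One remark: you mis-identify where the hypotheses enter. The pointwise bound $\langle x\rangle^\beta\lesssim\langle y\rangle^\beta\langle x-y\rangle^\beta$ is in fact valid for \emph{every} $\beta\geq 0$ (since $\langle x\rangle\lesssim\langle y\rangle\langle x-y\rangle$ and one may take powers), and your rescaling step $\|\phi_N\|_{L^{m'}}\lesssim N^{d/m}$ works for all $1\leq m\leq\infty$. So your argument actually proves the lemma without either the restriction $\beta\leq 1$ or $d<m$; these are artifacts of the paper's commutator approach, not of the lemma itself. That is a genuine advantage of your method, though the paper's proof makes transparent why the result bears the name ``commutator'' in the first place.
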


\begin{proof} To begin, we apply Bernstein to estimate
\begin{align*}
\|\x^\beta u\|_{L_x^\infty} &\lesssim \sum_{N\geq 1}\|P_N[\x^{\beta}u]\|_{L_x^\infty}  \lesssim \sum_{N\geq 1} N^{\frac{d}{m}}\|P_N[\x^\beta u]\|_{L_x^m} \\
& \quad\lesssim \sum_{N\geq 1} N^{\frac{d}{m}}\|\x^\beta P_N u\|_{L_x^m} + \sum_{N\geq 1}N^{\frac{d}{m}}\|[\x^\beta,P_N]u\|_{L_x^m}.
\end{align*}

Writing $\phi(\cdot/N)$ for the multiplier of $P_N$, a direct computation gives
\[
[a,P_N](x,y) = N^d \check\phi(N(x-y))[a(x)-a(y)]
\]
for any function $a$.  Thus, by Schur's test,
\[
\|[a,P_N]\|_{L^m_x\to L^m_x} \lesssim N^{-1}\|\partial a\|_{L^\infty_x}.
\]
Applying this with $a(x)=\x^\beta$ for $0\leq\beta\leq 1$, we find
\[
\sum_{N\geq 1}N^{\frac{d}{m}}\|[\x^\beta,P_N]u\|_{L_x^m} \lesssim \|u\|_{L_x^m}\lesssim \sum_{N\geq 1} N^{\frac{d}{m}}\|\x^\beta P_N u\|_{L_x^m},
\]
where we used $d<m$ to derive the first inequality above. \end{proof}

\subsection{The linear Schr\"odinger equation}
The standard dispersive estimate for the linear propagator $e^{it\Delta}$ in four space dimensions follows from the kernel estimate $|e^{it\Delta}(x,y)| \lesssim |t|^{-2}.$  This bound, together with the unitarity of the linear propagator in $L^2_x$, implies the full range of Strichartz estimates:

\begin{proposition}[Strichartz estimates]\label{P:strichartz} Let $2\leq q_1,q_2\leq\infty$ and $r_j=\frac{2q_j}{q_j-1}$.  Let $I$ be a time interval with $t_0\in\bar I$.  Then
\begin{align*}
\| e^{it\Delta}f\|_{L_t^{q_1}L_x^{r_1}(\R\times\R^4)}&\lesssim \|f\|_{L_x^2(\R^4)}, \\
\biggl\| \int_{t_0}^t e^{i(t-s)\Delta}F(s)\,ds\biggr\|_{L_t^{q_1}L_x^{r_1}(I\times\R^4)} &\lesssim \|F\|_{L_t^{q_2'}L_x^{r_2'}(I\times\R^4)}.
\end{align*}
\end{proposition}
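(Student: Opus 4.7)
The plan is to reduce both estimates to two standard inputs: the pointwise dispersive bound
$$
\|e^{it\Delta}f\|_{L^\infty_x} \lesssim |t|^{-2}\|f\|_{L^1_x},
$$
which follows from the kernel estimate $|e^{it\Delta}(x,y)|\lesssim |t|^{-2}$ by Young, together with the $L^2_x$ unitarity $\|e^{it\Delta}f\|_{L^2_x} = \|f\|_{L^2_x}$. Riesz--Thorin interpolation between these produces
$$
\|e^{it\Delta}f\|_{L^r_x} \lesssim |t|^{-2(1/2 - 1/r)} \|f\|_{L^{r'}_x} = |t|^{-2/q}\|f\|_{L^{r'}_x}
$$
for every admissible pair $(q,r)$ with $r = 2q/(q-1)$.

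First I would prove the homogeneous estimate via the $TT^*$ method. Setting $Tf(t) := e^{it\Delta}f$ as an operator from $L^2_x$ to $L^{q}_t L^{r}_x$, duality reduces the claim $\|Tf\|_{L^{q}_t L^{r}_x}\lesssim \|f\|_{L^2_x}$ to
$$
\biggl\|\int_\R e^{i(t-s)\Delta}F(s)\,ds\biggr\|_{L^{q}_t L^{r}_x} \lesssim \|F\|_{L^{q'}_t L^{r'}_x}.
$$
Inserting the interpolated dispersive bound pointwise in time and then applying Minkowski and the Hardy--Littlewood--Sobolev inequality on $\R$ with fractional-integration kernel $|t-s|^{-2/q}$ closes the estimate whenever $2/q < 1$, i.e., $q > 2$.

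The inhomogeneous estimate with two possibly different admissible pairs $(q_1,r_1)$ and $(q_2,r_2)$ then follows by composing the homogeneous estimate with its dual: this yields the non-retarded bound on $\int_\R e^{i(t-s)\Delta}F(s)\,ds$ directly. The retarded form $\int_{t_0}^{t}$ is recovered from the non-retarded one by the Christ--Kiselev lemma, which applies whenever $\frac{1}{q_1} + \frac{1}{q_2} < 1$ and therefore covers every choice of admissible exponents except the double endpoint $q_1 = q_2 = 2$.

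The main obstacle, unsurprisingly, is the endpoint $q = 2$ (so $r = 4$ in dimension four), where HLS fails and Christ--Kiselev is unavailable. Here I would invoke the Keel--Tao bilinear argument: write the inhomogeneous form as a bilinear pairing, decompose the time separation dyadically as $|t-s| \sim 2^j$, estimate each dyadic piece via the interpolated dispersive bound combined with Cauchy--Schwarz, and then sum over $j$ using a real-interpolation (atomic) device in place of the naive $\ell^1$ summation, which diverges logarithmically exactly at the endpoint. This argument simultaneously delivers the endpoint homogeneous bound and the inhomogeneous bound for every admissible pair, including the double endpoint $q_1 = q_2 = 2$, completing the proof.
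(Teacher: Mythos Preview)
Your proposal is correct and matches the paper's approach: the paper does not spell out a proof but simply records that the Strichartz estimates follow from the dispersive bound $|e^{it\Delta}(x,y)|\lesssim|t|^{-2}$ together with $L^2_x$-unitarity, which is exactly the input to the standard $TT^*$/Keel--Tao argument you outline. There is nothing to add.
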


For radial functions, one has additional estimates.  Letting $\Prad$ denote the projection onto radial functions, one has the following kernel estimate from \cite{KVZ}:
\begin{equation}\label{rde}
|e^{it\Delta}\Prad(x,y)| \lesssim |t|^{-\frac12}|x|^{-\frac32}|y|^{-\frac32}.
\end{equation}
Combined with the standard dispersive estimate, this leads to
\[
|e^{it\Delta}\Prad(x,y)| \lesssim |t|^{-\frac2q}|x|^{-\frac{2(q-1)}{q}}|y|^{-\frac{2(q-1)}{q}}\qtq{for all} 1\leq q\leq 4.
\]
Combining this with the standard $TT^*$ argument leads to the following weighted radial Strichartz estimates:
\begin{proposition}[Weighted radial Strichartz]\label{P:RS} For $f\in L^2_{\rad}(\R^4)$ and $2<q\leq 4$,
\[
\| |x|^{\frac{2(q-1)}{q}} e^{it\Delta} f\|_{L_t^q L_x^\infty(\R\times\R^4)} \lesssim \|f\|_{L_x^2(\R^4)}.
\]
\end{proposition}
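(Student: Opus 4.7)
The plan is to execute exactly the $TT^*$ argument alluded to in the paragraph preceding the statement. With $\alpha := \tfrac{2(q-1)}{q}$, the claimed estimate is nothing more than boundedness of the operator
\[
T:L^2_{\rad}(\R^4)\to L^q_t L^\infty_x(\R\times\R^4), \qquad (Tf)(t,x) := |x|^\alpha e^{it\Delta}f(x).
\]
A standard duality computation identifies the adjoint as $T^*G = \Prad \int_\R e^{-is\Delta}\bigl[|\cdot|^\alpha G(s,\cdot)\bigr]\,ds$, so that
\[
(TT^*F)(t,x) = |x|^\alpha \int_\R \bigl(e^{i(t-s)\Delta}\Prad[|\cdot|^\alpha F(s,\cdot)]\bigr)(x)\,ds,
\]
and proving the proposition reduces to the bound $TT^*:L^{q'}_t L^1_x \to L^q_t L^\infty_x$.

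Next I would insert the pointwise radial kernel estimate displayed just above the proposition (with the same $q$) directly into the identity for $TT^*F$. The outer $|x|^{\alpha}$ cancels the $|x|^{-\alpha}$ from the kernel, while inside the integral, pairing $|y|^{-\alpha}$ against $|y|^\alpha |F(s,y)|$ collapses the spatial integration to a plain $L^1_x$ norm of $F(s,\cdot)$. The net result is the scalar domination
\[
|(TT^*F)(t,x)| \lesssim \int_\R \frac{\|F(s,\cdot)\|_{L^1_x(\R^4)}}{|t-s|^{2/q}}\,ds,
\]
uniformly in $x$, so that $\|TT^*F(t)\|_{L^\infty_x}$ is pointwise-in-$t$ controlled by a one-dimensional Riesz potential of order $1-\tfrac{2}{q}$ applied to the scalar function $s\mapsto \|F(s)\|_{L^1_x}$.

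To close, I would invoke the one-dimensional Hardy--Littlewood--Sobolev inequality to bound this convolution in $L^q_t$ by the $L^{q'}_t$ norm of $\|F\|_{L^1_x}$. The exponent identity $\tfrac1{q'}-\tfrac1q = 1-\tfrac2q$ is automatic, so the only honest constraints are $0<\tfrac{2}{q}<1$ and $1<q'<q<\infty$, both of which amount to $q>2$; the restriction $q\leq 4$ enters only through the validity of the kernel bound itself. I do not anticipate a genuine obstacle here: the endpoint $q=4$ is \emph{not} an endpoint of HLS (it is the classical case $|t|^{-1/2}:L^{4/3}_t\to L^4_t$), and the only point that deserves a line of care is a standard density argument (say on Schwartz $F$) to justify the duality pairing used in the $TT^*$ reduction.
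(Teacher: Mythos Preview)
Your proposal is correct and follows exactly the approach the paper sketches: insert the interpolated radial kernel bound into the $TT^*$ operator, cancel the weights, and close with the one-dimensional Hardy--Littlewood--Sobolev inequality. The paper does not spell out any of these steps beyond the phrase ``standard $TT^*$ argument,'' so you have faithfully reconstructed the intended proof.
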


Interpolating the estimates of Propositions~\ref{P:strichartz} and \ref{P:RS} yields the following:

\begin{corollary}\label{C:RS} For $f\in L^2_{\rad}(\R^4)$, $2<q\leq 4$, and $0\leq \beta\leq \frac{2(q-1)}{q}$,
\[
\||x|^\beta e^{it\Delta} f\|_{L_t^q L_x^{\frac{4q}{2(q-1)-\beta q}}(\R\times\R^4)}\lesssim \|f\|_{L_x^2(\R^4)}.
\]
\end{corollary}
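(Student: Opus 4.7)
\textbf{Proof plan for Corollary~\ref{C:RS}.} The statement interpolates exactly between the two endpoints already at our disposal, so the plan is a direct log-convexity argument combining H\"older in space with H\"older in time. I first check that the two endpoints sit at the claimed values of $\beta$. At $\beta=0$, the target space is $L^q_tL_x^{r_0}$ with $r_0=\frac{4q}{2(q-1)}=\frac{2q}{q-1}$, which is the admissible pair $(q,r_0)$ in $\R^4$ (since $\tfrac{2}{q}+\tfrac{4}{r_0}=2$), so the bound follows from Proposition~\ref{P:strichartz}. At $\beta=\alpha_0:=\frac{2(q-1)}{q}$, the denominator vanishes and the target becomes $L^q_tL^\infty_x$ with weight $|x|^{\alpha_0}$, which is precisely Proposition~\ref{P:RS}.

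Next, I would set $\theta=\beta/\alpha_0\in[0,1]$, so that $\beta=\theta\alpha_0$ and $\tfrac{1}{r}=\tfrac{1-\theta}{r_0}+\tfrac{\theta}{\infty}$, where $r=\frac{4q}{2(q-1)-\beta q}$ is the target spatial exponent. Writing
\[
|x|^\beta e^{it\Delta}f \;=\; \bigl(e^{it\Delta}f\bigr)^{1-\theta}\cdot \bigl(|x|^{\alpha_0} e^{it\Delta}f\bigr)^{\theta}
\]
(meant in the sense of moduli), H\"older's inequality in $x$ with exponents $\frac{r_0}{1-\theta}$ and $\frac{\infty}{\theta}$ yields, for each fixed $t$,
\[
\bigl\||x|^\beta e^{it\Delta}f\bigr\|_{L_x^r} \;\leq\; \|e^{it\Delta}f\|_{L_x^{r_0}}^{1-\theta}\,\bigl\||x|^{\alpha_0} e^{it\Delta}f\bigr\|_{L_x^\infty}^{\theta}.
\]

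Finally, taking the $L^q_t$ norm and applying H\"older in time with exponents $\frac{1}{1-\theta}$ and $\frac{1}{\theta}$ gives
\[
\bigl\||x|^\beta e^{it\Delta}f\bigr\|_{L_t^qL_x^r} \;\leq\; \|e^{it\Delta}f\|_{L_t^qL_x^{r_0}}^{1-\theta}\,\bigl\||x|^{\alpha_0} e^{it\Delta}f\bigr\|_{L_t^qL_x^\infty}^{\theta} \;\lesssim\; \|f\|_{L_x^2}^{1-\theta}\|f\|_{L_x^2}^{\theta} = \|f\|_{L_x^2},
\]
where the last inequality uses Proposition~\ref{P:strichartz} on the first factor and Proposition~\ref{P:RS} on the second. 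The boundary cases $\theta=0,1$ are the endpoint estimates themselves, so no separate argument is needed. I do not anticipate any genuine obstacle here: the only point that warrants a sanity check is the arithmetic confirming that $r_0=\tfrac{2q}{q-1}$ really is the Strichartz dual exponent and that $\tfrac{1}{r}=\tfrac{1-\theta}{r_0}$ reproduces the stated $r=\frac{4q}{2(q-1)-\beta q}$, both of which are elementary.
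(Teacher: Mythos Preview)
Your proposal is correct and is precisely the interpolation the paper has in mind; the paper states only that the corollary follows by ``interpolating the estimates of Propositions~\ref{P:strichartz} and \ref{P:RS},'' and your H\"older-in-$x$ then H\"older-in-$t$ argument with $\theta=\beta q/(2(q-1))$ is exactly how one makes that one-line claim rigorous. The arithmetic checks you flag are routine and correct.
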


We will rely on local smoothing estimates (cf. \cite{ConsSaut,Sjolin87,Vega88}) to absorb some of the derivatives landing on the randomized linear evolution.

\begin{lemma}[Local smoothing]\label{L:LS} For any $\eps>0$,
\[
\|\x^{-\frac12-\eps} e^{it\Delta} f\|_{L_{t,x}^2(\R\times\R^d)} \lesssim \|f\|_{\dot H_x^{-\frac12}(\R^d)}.
\]
\end{lemma}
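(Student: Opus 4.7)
The strategy is to use Plancherel's theorem in the time variable to convert the local smoothing estimate into a uniform, scale-invariant weighted trace estimate for the Fourier extension operator on spheres.

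A direct computation shows that
\[
\mathcal F_t[e^{i\cdot\Delta}f](\tau,x) = c_d\int_{\R^d}\delta(\tau+|\xi|^2)e^{ix\cdot\xi}\hat f(\xi)\,d\xi,
\]
which is supported on $\tau\le 0$ and equals $\frac{c_d}{2\rho}\int_{|\xi|=\rho}e^{ix\cdot\xi}\hat f(\xi)\,dS(\xi)$ at $\tau=-\rho^2$. Parseval in $t$ followed by the polar substitution $\xi=\rho\omega$ then yields
\[
\int_\R|e^{it\Delta}f(x)|^2\,dt \;=\; c_d\int_0^\infty \rho^{2d-3}\Bigl|\int_{S^{d-1}} e^{i\rho x\cdot\omega}\hat f(\rho\omega)\,d\sigma(\omega)\Bigr|^2 d\rho.
\]
Multiplying by $\langle x\rangle^{-1-2\eps}$, integrating in $x$, rescaling $x\mapsto x/\rho$, and comparing with the polar formula $\|f\|_{\dot H^{-1/2}}^2 = c_d\int_0^\infty \rho^{d-2}\|\hat f(\rho\,\cdot\,)\|_{L^2(S^{d-1})}^2\,d\rho$, the lemma reduces to the scale-invariant claim
\[
\int_{\R^d}\langle y/\rho\rangle^{-1-2\eps}\,\bigl|\widehat{g\,d\sigma}(y)\bigr|^2\,dy \;\lesssim\; \rho\,\|g\|_{L^2(S^{d-1})}^2\qquad\text{uniformly for }\rho>0,\ g\in L^2(S^{d-1}).
\]

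This remaining bound will be proved by expanding the square and applying Schur's test, which reduces matters to
\[
\sup_{\omega\in S^{d-1}}\rho^d\int_{S^{d-1}}\bigl|h\bigl(\rho(\omega-\omega')\bigr)\bigr|\,d\sigma(\omega') \;\lesssim\; \rho, \qquad h:=\mathcal F\bigl[\langle\,\cdot\,\rangle^{-1-2\eps}\bigr].
\]
In the relevant regime $0<1+2\eps<d$ one has $|h(\zeta)|\lesssim|\zeta|^{1+2\eps-d}$ near $\zeta=0$ together with rapid decay at infinity. Parametrizing $\omega'$ by the angle $\theta$ from $\omega$ and using $d\sigma\sim\theta^{d-2}\,d\theta$, the contributions from the regions $\rho\theta\lesssim 1$ and $\rho\theta\gg 1$ can be estimated separately and each is $O(\rho)$. (When $1+2\eps\ge d$, $h$ is bounded and the estimate is easier.)

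The main technical obstacle is the small-angle kernel estimate, where the hypothesis $\eps>0$ becomes essential: it makes the resulting radial integral $\int \theta^{2\eps-1}\,d\theta$ convergent near zero and produces precisely the scaling factor $\rho$ needed on the right-hand side. One could alternatively bypass this analysis by invoking the Agmon--Kato--Kuroda resolvent estimate $\|\langle x\rangle^{-1/2-\eps}(-\Delta-\tau\pm i0)^{-1}\langle x\rangle^{-1/2-\eps}\|_{L^2\to L^2}\lesssim |\tau|^{-1/2}$ combined with a $TT^*$/spectral argument, as in \cite{ConsSaut}.
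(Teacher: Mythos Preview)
The paper does not give its own proof of this lemma; it simply records the statement with citations to \cite{ConsSaut,Sjolin87,Vega88}. Your argument is correct and is one of the standard routes to local smoothing: Plancherel in $t$ converts the estimate into a uniform-in-$\rho$ weighted $L^2$ bound for the spherical Fourier extension operator, and Schur's test together with the known asymptotics of the Bessel kernel $h=\mathcal F[\langle\cdot\rangle^{-1-2\eps}]$ (namely $|h(\zeta)|\lesssim|\zeta|^{1+2\eps-d}$ near the origin and rapid decay at infinity, for $0<1+2\eps<d$) closes the bound. The split into $\rho\theta\lesssim1$ and $\rho\theta\gg1$ is the right one, and your identification of $\eps>0$ as precisely the condition making $\int_0\theta^{2\eps-1}\,d\theta$ convergent is the heart of the matter. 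The resolvent/$TT^*$ alternative you mention is essentially the approach in the cited references and would be equally acceptable here.
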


\subsection{Almost sure bounds}

In this subsection we develop a collection of almost sure estimates on the randomized free evolution.  We start by estimating the moments of the randomized free evolution.

\begin{lemma}[Moment bounds]\label{L:SF} Let $f^\omega$ be the randomization of $f$ as in Definition~\ref{def:random}.  For $m\geq 1$,
\[
\E\bigl(|f^\omega|^{2m}\bigr)\lesssim_m \bigl(|\check\varphi|\ast|f|^2\bigr)^m.
\]
\end{lemma}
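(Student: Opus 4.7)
The plan is to reduce the estimate to a pointwise variance bound by exploiting Gaussianity. For each fixed $x$, since $\{X_k\}$ are independent mean-zero Gaussians with uniformly bounded variance, $f^\omega(x) = \sum_{k\in\Z^4}X_k f_k(x)$ is itself a mean-zero Gaussian whose variance is of order $\sum_k|f_k(x)|^2$. The standard moment formula for Gaussians (or Khintchine's inequality, as noted just after Definition~\ref{def:random}) then gives
\[
\E\bigl(|f^\omega(x)|^{2m}\bigr)\lesssim_m\Bigl(\sum_{k\in\Z^4}|f_k(x)|^2\Bigr)^m,
\]
so everything reduces to proving the pointwise bound $\sum_k|f_k(x)|^2\lesssim(|\check\varphi|*|f|^2)(x)$.

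To establish this variance bound, I would use the explicit relation $\check\varphi_k(z)=e^{ik\cdot z}\check\varphi(z)$ to expand the square and interchange summation with integration:
\[
\sum_k|f_k(x)|^2 = \int\!\!\int f(y)\overline{f(y')}\check\varphi(x-y)\overline{\check\varphi(x-y')}\Bigl[\sum_k e^{ik\cdot(y'-y)}\Bigr]dy\,dy'.
\]
Poisson summation converts $\sum_k e^{ik\cdot w}$ into the lattice of delta masses $(2\pi)^4\sum_{n\in\Z^4}\delta(w-2\pi n)$, collapsing the right-hand side to $(2\pi)^4\sum_n\int f(y)\overline{f(y+2\pi n)}\check\varphi(x-y)\overline{\check\varphi(x-y-2\pi n)}\,dy$. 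The diagonal $n=0$ term is exactly $(|\check\varphi|^2*|f|^2)(x)$, which is controlled by $\|\check\varphi\|_{L^\infty}(|\check\varphi|*|f|^2)(x)$. For $n\ne 0$, the identity $2\pi n=(x-y)-(x-y-2\pi n)$ forces $\max(|x-y|,|x-y-2\pi n|)\geq\pi|n|$, so the Schwartz decay of $\check\varphi$ produces a factor $\langle n\rangle^{-N}$ for arbitrary $N$ while leaving one copy of $|\check\varphi|$ intact. Combined with AM--GM on $|f(y)f(y+2\pi n)|\leq\tfrac12(|f(y)|^2+|f(y+2\pi n)|^2)$, each off-diagonal contribution is bounded by $\langle n\rangle^{-N}$ times translates of $|\check\varphi|*|f|^2$, yielding a summable series.

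The main obstacle I anticipate is the tail analysis for $n\ne 0$: the translated convolutions $(|\check\varphi|*|f|^2)(x\pm 2\pi n)$ produced along the way do not pointwise reduce to $(|\check\varphi|*|f|^2)(x)$, so some care is needed to absorb them into a single rapidly decaying convolution kernel. Since the subsequent applications of the lemma only care about the Schwartz/$L^1\cap L^\infty$ character of this kernel, this is a purely technical step — trading polynomial decay of $\check\varphi$ against the lattice growth — and introduces no new ideas.
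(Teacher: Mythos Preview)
Your approach is essentially identical to the paper's: Gaussian moments reduce the problem to bounding the variance $\sum_k|f_k(x)|^2$, Poisson summation collapses the $k$-sum into a lattice sum, and AM--GM (the paper writes Cauchy--Schwarz) handles the cross terms. The obstacle you anticipate is a phantom: after AM--GM and the substitution $y\mapsto y-2\pi n$ in the term carrying $|f(y+2\pi n)|^2$, both pieces take the form $\int |f(y)|^2\,|\check\varphi(x-y)|\,|\check\varphi(x-y\mp 2\pi n)|\,dy$; now sum over $n$ \emph{before} integrating, using the Schwartz bound $\sum_{n\in\Z^4} |\check\varphi(z-2\pi n)|\lesssim 1$ uniformly in $z$, and what remains is exactly $(|\check\varphi|*|f|^2)(x)$ --- no translated convolutions appear.
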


\begin{proof} As $\sum X_k f_k(x)$ is Gaussian, its moments can be computed exactly.  Specifically, we have
\begin{align*}
\E\bigl(\bigl|\sum_k X_k f_k(x)\bigr|^{2m}\bigr) & \sim_m \biggl(\sum_k |f_k(x)|^2\biggr)^m.
\end{align*}
Next, using the Poisson summation formula and Cauchy--Schwarz, we estimate
\begin{align*}
\sum_{k\in\Z^4} |f_k(x)|^2& \sim \sum_{k\in\Z^4} \iint \check{\varphi}_k(y)\overline{\check{\varphi}_k}(z) f(x-y)\overline{f}(x-z)\,dy\,dz \\
& \sim \iint \sum_{k\in\Z^4} e^{ik(y-z)}\check\varphi(y)\overline{\check{\varphi}}(z)f(x-y)\overline{f}(x-z)\,dy\,dz \\
& \sim \int \sum_{\ell\in2\pi\Z^4} \check\varphi(y)\overline{\check{\varphi}}(y-\ell) f(x-y)\overline{f}(x-y+\ell)\,dy \\
& \lesssim \int \sum_{\ell\in2\pi\Z^4}\bigl[|\check\varphi(y)\check{\varphi}(y-\ell)|+|\check{\varphi}(y)\check{\varphi}(y+\ell)|\bigr]|f(x-y)|^2\,dy \\
& \lesssim (|\check{\varphi}|\ast |f|^2)(x).
\end{align*}
This completes the proof. \end{proof}

Combining Lemmas~\ref{L:WY} and \ref{L:SF}, we derive almost sure bounds on weighted norms of the randomized free evolution.

\begin{lemma}\label{L:weighted} For $1\leq q,r\leq m<\infty$ and $\beta\geq 0$,
\begin{equation}\label{E:weighted1}
\E\biggl[\| \x^\beta e^{it\Delta}f^\omega\|_{L_t^qL_x^m}^{q}\biggr] \lesssim
	\|\x^\beta e^{it\Delta} f\|_{L_t^q L_x^r}^{q}.
\end{equation}
In particular, for $p>2$, $1\leq r_1,r_2,\frac{2p}{p-2}\leq m<\infty$, and $\beta\geq 0$,
\begin{equation}\label{E:weighted2}
\E\biggl[\| \x^\beta e^{it\Delta} f^\omega\|_{L_t^{\frac{2p}{p-2}}L_x^m}^{\frac{2p}{p-2}}\biggr] \lesssim
	\|e^{it\Delta} f\|_{L_t^{\frac{2p}{p-2}} L_x^{r_1}(\R\times B)}^{\frac{2p}{p-2}}+\||x|^\beta e^{it\Delta} f\|_{L_t^{\frac{2p}{p-2}} L_x^{r_2}(\R\times B^c)}^{\frac{2p}{p-2}},
\end{equation}
where $B$ denotes the unit ball and $B^c$ its complement.  Unless otherwise indicated, all space-time norms are over $\R\times\R^4$.
\end{lemma}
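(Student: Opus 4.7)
The proof combines the pointwise Gaussian moment bound (Lemma~\ref{L:SF}) with the weighted convolution estimate (Lemma~\ref{L:WY}). Writing $F(t)=e^{it\Delta}f$ for brevity, I would tackle \eqref{E:weighted1} in three moves.

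First, since $q\le m$, Fubini's theorem commutes $\E$ with the $t$-integration, after which Jensen's inequality (applied to the concave map $y\mapsto y^{q/m}$) yields
$$
\E\bigl[\|\langle x\rangle^\beta F^\omega\|_{L^q_tL^m_x}^q\bigr]\le \int\bigl(\E\bigl[\|\langle x\rangle^\beta F^\omega(t)\|_{L^m_x}^m\bigr]\bigr)^{q/m}\,dt.
$$
Second, I unpack the $L^m_x$ norm by Fubini and invoke the pointwise Gaussian moment bound
$$
\E\bigl[|F^\omega(t,x)|^m\bigr]\lesssim \bigl(|\check\varphi|*|F(t)|^2\bigr)^{m/2}(x),
$$
an immediate extension of Lemma~\ref{L:SF} from even integer exponents to arbitrary real $m\ge 1$ (all moments of a mean-zero Gaussian are controlled by the appropriate power of its variance). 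This produces
$$
\E\bigl[\|\langle x\rangle^\beta F^\omega(t)\|_{L^m_x}^m\bigr]\lesssim \bigl\|\langle x\rangle^{2\beta}(|\check\varphi|*|F(t)|^2)\bigr\|_{L^{m/2}_x}^{m/2}.
$$
Third, Lemma~\ref{L:WY} applied with $u=|F(t)|^2$, target/source exponents $(m/2,r/2)$, and weight $2\beta$ gives
$$
\bigl\|\langle x\rangle^{2\beta}(|\check\varphi|*|F(t)|^2)\bigr\|_{L^{m/2}_x}\lesssim \|\langle x\rangle^{2\beta}|F(t)|^2\|_{L^{r/2}_x}=\|\langle x\rangle^\beta F(t)\|_{L^r_x}^2.
$$
Reassembling these three bounds and integrating in $t$ yields \eqref{E:weighted1}.

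For \eqref{E:weighted2} I would split the outer $L^m_x$ norm into contributions from $B$ and $B^c$ at the outset; since $q\ge 1$, the inequality $(a+b)^q\lesssim a^q+b^q$ reduces matters to handling the two regions separately. On $B$ the weight is harmless ($\langle x\rangle\sim 1$), on $B^c$ we have $\langle x\rangle\sim|x|$, and on each region I would re-run the three-step argument of \eqref{E:weighted1}. At the final convolution step, the integrand $(|\check\varphi|*|F(t)|^2)(x)$ is further decomposed according to whether $y\in B$ or $y\in B^c$: the diagonal pieces are bounded by the appropriate weighted $L^{r_i}$ norm of $F(t)$ via Lemma~\ref{L:WY}, while for the off-diagonal pieces one has $|x-y|\gtrsim 1$, so the rapid decay of $\check\varphi$ absorbs both the weight and any mismatch in integrability exponents.

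The main obstacle, in my view, is this last bookkeeping step: verifying that the exponents $r_1$ and $r_2$ may be chosen independently of each other amounts to checking that the Schwartz tail of $\check\varphi$ decouples $B$ and $B^c$ in the convolution. Once that coupling is broken, the two regions are treated in parallel and \eqref{E:weighted2} reduces to two copies of the single-region calculation carried out above.
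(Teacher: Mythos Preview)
Your argument for \eqref{E:weighted1} is essentially the paper's proof: Fubini/H\"older to move the expectation inside, then Lemma~\ref{L:SF} pointwise, then Lemma~\ref{L:WY} to pass from $L^{m/2}_x$ to $L^{r/2}_x$.

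For \eqref{E:weighted2} you take a genuinely different route from the paper, and your version has a gap. The paper does \emph{not} split the outer $L^m_x$ norm by region. Instead it uses the linearity of the randomization (Remark~\ref{Remarks}(i)) together with the identity $e^{it\Delta}f^\omega=(e^{it\Delta}f)^\omega$ to write
\[
e^{it\Delta}f^\omega = [\chi\, e^{it\Delta}f]^\omega + [(1-\chi)e^{it\Delta}f]^\omega,
\]
where $\chi$ is the indicator of $B$, and then applies \eqref{E:weighted1} separately to each summand. Since $\chi\,e^{it\Delta}f$ is supported in $B$ (where $\langle x\rangle\sim 1$) and $(1-\chi)e^{it\Delta}f$ is supported in $B^c$ (where $\langle x\rangle\sim|x|$), the right-hand sides of the two applications of \eqref{E:weighted1} are exactly the two terms in \eqref{E:weighted2}. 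No off-diagonal analysis is needed at all, because the support restriction is placed on the \emph{input} to the convolution rather than on the output variable.

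Your approach, by contrast, splits on the output variable $x$ and then on the convolution variable $y$, and you claim that the off-diagonal pieces satisfy $|x-y|\gtrsim 1$. This is false as stated: if $x$ lies just inside the unit sphere and $y$ just outside, then $|x-y|$ can be arbitrarily small, so the rapid decay of $\check\varphi$ does not automatically decouple the regions. The argument can be repaired (for instance by splitting at radii $1$ and $2$ and handling the annulus $1\le|y|\le 2$ separately, where all weights are $\sim 1$), but the fix you describe does not work verbatim. The paper's trick of splitting before randomization sidesteps this entirely and is worth internalizing.
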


\begin{proof} Using H\"older's inequality and the assumption $m\geq q$,
\[
\text{LHS}\eqref{E:weighted1} = \int \E\bigl(\|\x^\beta e^{it\Delta} f^\omega \|_{L_x^m}^{q}\bigr)\,dt
\lesssim \int\bigl\{ \E\bigl(\|\x^\beta e^{it\Delta} f^\omega\|_{L_x^m}^m\bigr)\bigr\}^{\frac{q}{m}}\,dt.
\]
Next, by Lemmas~\ref{L:SF} and \ref{L:WY},
\begin{align*}
\int \E\bigl( \x^{\beta m}|e^{it\Delta} f^\omega |^m\bigr) \,dx\lesssim \int \x^{\beta m}[|\check\varphi|\ast|e^{it\Delta} f|^2]^{\frac{m}{2}}(x)\,dx \lesssim \|\x^\beta e^{it\Delta} f\|_{L_x^r}^m.
\end{align*}
This proves \eqref{E:weighted1}.  To derive \eqref{E:weighted2}, we write
\[
e^{it\Delta}f^\omega = [\chi e^{it\Delta} f]^\omega + [(1-\chi)e^{it\Delta}f]^\omega,
\]
where $\chi$ is the characteristic function of the unit ball, and apply the argument above to each summand.
\end{proof}

\begin{proposition}\label{P:STE2} Let $f\in L^2(\R^4)$. For $2\leq p\leq\infty$,
\[
\|e^{it\Delta} f^\omega\|_{L_t^\infty L_x^2}+ \|e^{it\Delta} f^\omega\|_{L_t^3 L_x^6}  + \|e^{it\Delta}f^\omega\|_{L_{t,x}^4} + \|e^{it\Delta} f^\omega\|_{L_t^{\frac{4p}{p+2}}L_x^4}< \infty
\]
almost surely, where all space-time norms are over $\R\times\R^4$.  If $f\in H^s(\R^4)$ for some $s>\frac12$, then we also have
\[
\|e^{it\Delta} f^\omega\|_{L_t^\infty L_x^4(\R\times\R^4)} < \infty
\]
almost surely.
\end{proposition}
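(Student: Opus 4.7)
My plan is to reduce each of these to a standard Strichartz estimate via Lemma~\ref{L:weighted} with $\beta=0$. The case $\|e^{it\Delta} f^\omega\|_{L_t^\infty L_x^2}$ is immediate since $e^{it\Delta}$ is unitary on $L_x^2$ and Remark~\ref{Remarks}(ii) gives $\E(\|f^\omega\|_{L^2}^2)\sim\|f\|_{L^2}^2$. For the remaining three, Lemma~\ref{L:weighted} reads
\[
\E\bigl(\|e^{it\Delta} f^\omega\|_{L_t^q L_x^m}^{q}\bigr) \lesssim \|e^{it\Delta} f\|_{L_t^q L_x^r}^{q}
\]
whenever $1\le q,r\le m<\infty$; I would choose $(q,r)$ Strichartz-admissible with $r\leq m$. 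Explicitly: $(q,r)=(3,3)$ for $L_t^3 L_x^6$; $(q,r)=(4,\tfrac{8}{3})$ for $L_{t,x}^4$; and for $L_t^{4p/(p+2)} L_x^4$ with $2\le p\le\infty$, take $q=\tfrac{4p}{p+2}\in[2,4]$ and $r=\tfrac{2q}{q-1}\in[\tfrac{8}{3},4]$. In each case Strichartz (Proposition~\ref{P:strichartz}) bounds the right-hand side by $\|f\|_{L^2}^q$, giving a.s.\ finiteness.

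\textbf{The $L_t^\infty L_x^4$ bound.} This is the subtle piece, since $L_t^\infty$ is not a Strichartz-admissible exponent. My plan is to decompose $f=\sum_N P_N f$ via Littlewood--Paley and, for each $N\in 2^{\mathbb{N}}$, apply the one-dimensional Gagliardo--Nirenberg inequality in time to the scalar function $h(t):=\|e^{it\Delta} P_N f^\omega(\cdot)\|_{L_x^4}$. Since $t\mapsto e^{it\Delta} P_N f^\omega$ is a $C^1$ curve in $L_x^4$ (note $P_N f^\omega\in L_x^2\cap L_x^\infty$ almost surely by Bernstein), $h$ is locally Lipschitz with $|h'(t)|\leq \|\Delta e^{it\Delta} P_N f^\omega\|_{L_x^4}\lesssim N^2 h(t)$ by Bernstein once more. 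Applying Lemma~\ref{L:weighted} to $P_N f$ shows $h\in L_t^4$ almost surely, hence also $h'\in L_t^4$, so $h\in W^{1,4}(\R)\hookrightarrow L^\infty(\R)$ and
\[
\|h\|_{L_t^\infty} \lesssim \|h\|_{L_t^4}^{3/4}\|h'\|_{L_t^4}^{1/4} \lesssim N^{1/2}\|h\|_{L_t^4}.
\]

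\textbf{Summation.} Writing $c_N:=N^s\|P_N f\|_{L_x^2}$ (so that $\|c_N\|_{\ell_N^2}\lesssim\|f\|_{H^s}$), Lemma~\ref{L:weighted} gives
\[
\bigl(\E\|e^{it\Delta} P_N f^\omega\|_{L_t^\infty L_x^4}^4\bigr)^{1/4} \lesssim N^{1/2}\|P_N f\|_{L_x^2} = N^{1/2-s}c_N.
\]
The triangle inequality in $L_\omega^4$ followed by Cauchy--Schwarz on the dyadic sum then yields
\[
\bigl(\E\|e^{it\Delta} f^\omega\|_{L_t^\infty L_x^4}^4\bigr)^{1/4} \leq \sum_{N\in 2^{\mathbb{N}}} N^{1/2-s}c_N \lesssim \Bigl(\sum_{N\in 2^{\mathbb{N}}} N^{1-2s}\Bigr)^{1/2}\|f\|_{H^s},
\]
which is finite precisely when $s>\tfrac12$; hence $e^{it\Delta} f^\omega\in L_t^\infty L_x^4$ almost surely.

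\textbf{Main obstacle.} The crux is the $L_t^\infty L_x^4$ estimate. The naive approach, combining Sobolev embedding in time with Minkowski's inequality to swap $L_x^4$ and $L_t^p$, forces $p\leq 4$ and hence requires $s>1$. Running Gagliardo--Nirenberg directly on the scalar $h(t)$ sidesteps the Minkowski swap entirely, and the resulting $N^{1/2}$ loss is exactly offset by the assumption $s>\tfrac12$.
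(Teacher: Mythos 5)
Your proof is correct and follows essentially the same route as the paper: the first four norms are handled identically (Lemma~\ref{L:weighted} with $\beta=0$ plus Strichartz, and unitarity for $L_t^\infty L_x^2$), and for $L_t^\infty L_x^4$ you Littlewood--Paley decompose, trade $\partial_t = i\Delta$ for $N^2$ via Bernstein, gain a factor $N^{1/2}$, and sum using $s>\tfrac12$. The only cosmetic difference is that you invoke the 1D Gagliardo--Nirenberg inequality $\|h\|_{L_t^\infty}\lesssim\|h\|_{L_t^4}^{3/4}\|h'\|_{L_t^4}^{1/4}$ on the scalar $h(t)=\|e^{it\Delta}P_Nf^\omega\|_{L_x^4}$, whereas the paper derives the equivalent parametrized estimate $\|F\|_{L_t^\infty L_x^4}^4\lesssim\delta^{-1}\|F\|_{L_{t,x}^4}^4+\delta^3\|\partial_tF\|_{L_{t,x}^4}^4$ via FTC, averaging over an interval of length $\delta$, and partitioning, then optimizes in $\delta$ to recover precisely your Gagliardo--Nirenberg bound.
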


\begin{proof} Almost sure finiteness of the $L_t^\infty L_x^2$ norm of the randomized free evolution follows from the unitarity of the linear propagator on $L^2_x$ and Remark~\ref{Remarks}(ii).

Using Lemma~\ref{L:weighted} (with $\beta=0$) and the Strichartz estimates, we find
\begin{align}
\E\bigl(\|e^{it\Delta}f^\omega\|_{L_t^3 L_x^6}^3\bigr) &\lesssim \|e^{it\Delta} f\|_{L_{t,x}^3}^3\lesssim \|f\|_{L_x^2}^3,  \notag\\
\E\bigl(\|e^{it\Delta} f^\omega\|_{L_{t,x}^4}^4\bigr)&\lesssim \|e^{it\Delta} f\|_{L_t^4 L_x^{\frac83}}^4\lesssim \|f\|_{L_x^2}^4,\label{4,4}\\
\E\bigl(\|e^{it\Delta}f^{\omega}\|_{L_t^{\frac{4p}{p+2}}L_x^{4}}^{\frac{4p}{p+2}}\bigr)&\lesssim \|e^{it\Delta} f\|_{L_t^{\frac{4p}{p+2}}L_x^{\frac{8p}{3p-2}}}^{\frac{4p}{p+2}}\lesssim \|f\|_{L_x^2}^{\frac{4p}{p+2}},\notag
\end{align}
where we used $p\geq 2$ for the last estimate.  Thus, these norms are finite almost surely.

Finally, we consider the $L_t^\infty L_x^4$ norm.  We begin with a general estimate:
\begin{equation}\label{E:gen}
\|F\|_{L_t^\infty L_x^4(\R\times\R^4)}^4 \lesssim \delta^{-1}\|F\|_{L_{t,x}^4(\R\times\R^4)}^4 + \delta^3\|\partial_t F\|_{L_{t,x}^4(\R\times\R^4)}^4\qtq{for any}\delta>0.
\end{equation}
To prove this, first fix a bounded interval $I\subset\R$.  By the fundamental theorem of calculus,
\[
\|F\|_{L_t^\infty L_x^4(I\times\R^4)} \leq \|F(t_0)\|_{L_x^4(\R^4)} + \|\partial_t F\|_{L_t^1 L_x^4(I\times\R^4)}
\]
uniformly in $t_0\in I$.  Averaging over $t_0\in I$ and applying H\"older's inequality,
\begin{align*}
\|F\|_{L_t^\infty L_x^4(I\times\R^4)} &\leq |I|^{-1} \|F\|_{L_t^1 L_x^4(I\times\R^4)} + \|\partial_t F\|_{L_t^1 L_x^4(I\times\R^4)} \\
& \leq |I|^{-\frac14}\|F\|_{L_{t,x}^4(I\times\R^4)} + |I|^{\frac34}\|\partial_t F\|_{L_{t,x}^4(I\times\R^4)}.
\end{align*}
To pass to \eqref{E:gen}, we partition $\R$ into intervals of length $\delta$ and sum the fourth power of the inequality above over the partition.

Now we apply \eqref{E:gen} to $F=e^{it\Delta} P_{N} f^\omega$.  Using also Bernstein and \eqref{4,4}, we find
\begin{align*}
\E\bigl( \| e^{it\Delta} P_{N} f^\omega \|_{L_t^\infty L_x^4}^4\bigr) & \lesssim  (\delta^{-1} + \delta^3 N^8)\E\bigl( \|e^{it\Delta} P_{N} f^\omega\|_{L_{t,x}^4}^4\bigr) \\
& \lesssim (\delta^{-1}+ \delta^3 N^8)\|P_Nf\|_{L^2_x}^4 \\
& \lesssim (\delta^{-1}N^{-4s}+ \delta^3 N^{8-4s})\|f\|_{H_x^s}^4.
\end{align*}
Optimizing in $\delta$, we get
\[
\E\bigl(\|e^{it\Delta}P_Nf^\omega\|_{L_t^\infty L_x^4}^4\bigr) \lesssim N^{2-4s}\|f\|_{H_x^s}^4.
\]
Thus
\[
\|e^{it\Delta} f^\omega\|_{L_\omega^4 L_t^\infty L_x^4} \lesssim \sum_{N\geq 1} N^{\frac12-s}\|f\|_{H_x^s} \lesssim \|f\|_{H_x^s}
\]
whenever $s>\frac12$, which implies almost sure finiteness of the $L_t^\infty L_x^4$ norm.
\end{proof}

\begin{proposition}\label{P:STE} Fix $s>\frac 56$ and $f\in H^s_{\rad}(\R^4)$.  For $p$ sufficiently large,
\[
 \|\x^{\frac3p+\frac12} e^{it\Delta} f^\omega\|_{L_t^{\frac{2p}{p-2}}L_x^\infty} + \|\x^{\frac3p+\frac12}\nabla e^{it\Delta} f^\omega\|_{L_t^{\frac{2p}{p-2}}L_x^\infty} + \|\x^{\frac{1}{p}}\nabla e^{it\Delta} f^\omega\|_{L_t^{\frac{2p}{p-2}}L_x^4} <\infty
\]
almost surely, where all space-time norms are over $\R\times\R^4$.
\end{proposition}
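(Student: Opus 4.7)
The plan is to estimate all three norms in parallel, treating each dyadic Littlewood--Paley piece $P_N f$ separately and summing. For the two $L_x^\infty$ norms I first apply Lemma~\ref{L:commutator} to reduce to $L_x^m$ with $m$ large (at the cost of a factor $N^{4/m}$); for the $L_x^4$ norm I take $m=4$ directly. Estimate \eqref{E:weighted2} in Lemma~\ref{L:weighted} then replaces the randomized norm of each piece by a sum of deterministic norms of $e^{it\Delta}P_N f$ on $\R\times B$ (unweighted) and on $\R\times B^c$ (weighted by $|x|^\beta$).

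The first norm has no derivative: the $B$-piece is handled by standard Strichartz and the $B^c$-piece by Corollary~\ref{C:RS} (for $\beta=\tfrac3p+\tfrac12$ and $q=\tfrac{2p}{p-2}$ one computes $r_2=4q$, which is $\leq m$ for $m$ large). Each piece yields the bound $\|P_Nf\|_{L^2_x}\lesssim N^{-s}\|f\|_{H^s_x}$, so $\sum_N N^{4/m-s}$ converges for any $s>0$ once $m$ is large.

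For the second and third norms the $\nabla$ introduces a factor of $N\|P_Nf\|_{L^2_x}$, and the naive bound $\sum_N N^{1-s}$ diverges. To save half a power of $N$ I use complex interpolation of weighted $L^p$ spaces to combine Corollary~\ref{C:RS} applied to $\nabla P_Nf$ with Lemma~\ref{L:LS} applied to $\nabla P_Nf$, using $\|\nabla P_Nf\|_{\dot H^{-1/2}}\lesssim N^{1/2}\|P_Nf\|_{L^2_x}$. Taking the Strichartz endpoint $r_A=\infty$, $\beta_A=2(q_A-1)/q_A$, interpolation at parameter $\theta\in(0,1)$ (weight $\theta$ on the Strichartz side) produces
\[
\bigl\||x|^{\theta\beta_A}\x^{-(1-\theta)(\frac12+\eps)}\,e^{it\Delta}\nabla P_Nf\bigr\|_{L^{q_\theta}_tL^{r_\theta}_x}\lesssim N^{(1+\theta)/2}\|P_Nf\|_{L^2_x},
\]
with $\tfrac{1}{q_\theta}=\tfrac{\theta}{q_A}+\tfrac{1-\theta}{2}$ and $r_\theta=\tfrac{2}{1-\theta}$. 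Choosing $q_A$ so that $q_\theta=\tfrac{2p}{p-2}$ fixes $\beta_A=1+\tfrac{2}{\theta p}$, and the induced power of $|x|$ on $B^c$ becomes $\theta+\tfrac{2}{p}-(1-\theta)(\tfrac{1}{2}+\eps)$. On $B$ the analogous interpolation between standard Strichartz and local smoothing gives the same $N$-factor with no weight condition.

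The binding constraint is that the interpolated weight on $B^c$ must dominate $|x|^{3/p+1/2}$ (for the second norm), which forces $\theta\geq\tfrac{2}{3}+O(\tfrac{1}{p})+O(\eps)$, while summability of $\sum_N N^{4/m+(1+\theta)/2-s}$ forces $\theta<2s-1-\tfrac{8}{m}$. These two windows overlap precisely when $s>\tfrac{5}{6}$, provided $p,m$ are sufficiently large and $\eps$ sufficiently small; this is where the hypothesis $s>\tfrac{5}{6}$ enters. The third norm ($L_x^4$, weight $\x^{1/p}$) follows the same scheme with $m=4$; its weight condition is much weaker and is trivially satisfied in the same window of $\theta$. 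The main obstacle throughout is this simultaneous matching of weight, time exponent, and space exponent, which pins the interpolation parameter into a window that is nonempty exactly when $s>\tfrac{5}{6}$.
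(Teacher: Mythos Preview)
Your approach is essentially the paper's: Littlewood--Paley decomposition, Lemma~\ref{L:commutator} to pass from $L_x^\infty$ to $L_x^m$, Lemma~\ref{L:weighted} to split into $B$ and $B^c$, and then an interpolation between the radial Strichartz estimate (Proposition~\ref{P:RS}) and local smoothing (Lemma~\ref{L:LS}) to recover half a derivative. The paper carries out the interpolation via H\"older's inequality rather than complex interpolation, but the resulting exponents and the binding constraint $s>\tfrac56$ are identical; your computation of $q_A$, $\beta_A$, and the weight balance on $B^c$ matches the paper's Lemma~\ref{L:2nd-bd}.

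There is one slip in your treatment of the third norm. You assert it ``follows the same scheme with $m=4$'' and is ``trivially satisfied in the same window of $\theta$''. But Lemma~\ref{L:weighted} requires $r_2\leq m$, and with $m=4$ this forces $r_2=\tfrac{2}{1-\theta}\leq 4$, i.e.\ $\theta\leq\tfrac12$. Your window $\theta\gtrsim\tfrac23$ from the second norm therefore cannot be reused. The fix is immediate: the weight condition for the third norm only requires $\theta\gtrsim\tfrac13$, so you take $\theta$ just above $\tfrac13$ (as the paper does in Lemma~\ref{L:3rd-bd}), giving $r_2\approx 3\leq 4$ and an $N$-exponent $(1+\theta)/2\approx\tfrac23$. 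This is why the third norm only needs $s>\tfrac23$ and does not affect the overall threshold $s>\tfrac56$.
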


We break the proof of Proposition~\ref{P:STE} into three lemmas, whose proofs all rely on applications of Lemma~\ref{L:weighted}, but with different exponents.

\begin{lemma}\label{L:1st-bd}  Let $4\leq p<\infty$ and $0\leq\beta\leq 1$. Then for any $s>0$ and $f\in H^s_{\rad}(\R^4)$,
\[
\E\biggl[\| \x^\beta e^{it\Delta} f^\omega\|_{L_t^{\frac{2p}{p-2}}L_x^\infty(\R\times\R^4)}^{\frac{2p}{p-2}}\biggr]\lesssim \|f\|_{H^s_x(\R^4)}^{\frac{2p}{p-2}}.
\]
\end{lemma}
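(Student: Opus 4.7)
The plan is to reduce to the deterministic weighted radial Strichartz estimates (Proposition~\ref{P:strichartz} and Corollary~\ref{C:RS}) via a Littlewood--Paley decomposition, with Lemma~\ref{L:commutator} passing from $L^\infty_x$ to $L^m_x$ at each frequency and Lemma~\ref{L:weighted}\,\eqref{E:weighted2} removing the randomization. Set $q = \frac{2p}{p-2} \in (2, 4]$; a parameter $m$ will be chosen at the end subject to $m > 4$ (needed for Lemma~\ref{L:commutator}), $m \geq \max(q, r_1, r_2)$ (needed for Lemma~\ref{L:weighted}), and $m > 4/s$ (needed for the dyadic sum to converge).

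First, apply Lemma~\ref{L:commutator} pointwise in $(t, \omega)$ and take the $L^q_t L^q_\omega$ norm of both sides; Minkowski's inequality gives
\[
\bigl(\E\|\x^\beta e^{it\Delta} f^\omega\|_{L^q_t L^\infty_x}^q\bigr)^{1/q} \lesssim \sum_{N \geq 1} N^{4/m} \bigl(\E\|\x^\beta e^{it\Delta} P_N f^\omega\|_{L^q_t L^m_x}^q\bigr)^{1/q}.
\]
Since randomization commutes with Fourier multipliers (Remark~\ref{Remarks}(i)) and $P_N f$ is radial, Lemma~\ref{L:weighted}\,\eqref{E:weighted2} controls each summand by
\[
\|e^{it\Delta} P_N f\|_{L^q_t L^{r_1}_x(\R \times B)} + \|\,|x|^\beta e^{it\Delta} P_N f\|_{L^q_t L^{r_2}_x(\R \times B^c)}.
\]
Choose $r_1 = \frac{4p}{p+2}$ so that $(q, r_1)$ is a Strichartz pair on $\R^4$; H\"older on the bounded set $B$ and Proposition~\ref{P:strichartz} bound the first term by $\|P_N f\|_{L^2_x}$. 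Choose $r_2 = \frac{4p}{p(1-\beta) + 2}$; since $\beta \leq 1 < \frac{2(q-1)}{q}$ for all $q > 2$, Corollary~\ref{C:RS} bounds the second term by $\|P_N f\|_{L^2_x}$ as well.

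Finally, combine with $\|P_N f\|_{L^2_x} \lesssim N^{-s}\|P_N f\|_{H^s_x}$ and apply Cauchy--Schwarz in $N$:
\[
\sum_{N \geq 1} N^{4/m - s}\|P_N f\|_{H^s_x} \lesssim \Bigl(\sum_N N^{2(4/m - s)}\Bigr)^{1/2} \|f\|_{H^s_x} \lesssim \|f\|_{H^s_x},
\]
provided $m > 4/s$; raising to the $q$-th power yields the claim. The only delicate (but routine) step is choosing $m$ large enough to meet all four lower-bound constraints simultaneously, which is always possible for fixed $s > 0$, $p < \infty$, and $\beta \in [0, 1]$. No genuine obstacle arises: the proof simply assembles the tools already developed in the paper.
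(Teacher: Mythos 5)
Your proof is correct and follows essentially the same route as the paper's: both use Lemma~\ref{L:weighted}\,\eqref{E:weighted2} with $r_1=\frac{4p}{p+2}$, $r_2=\frac{4p}{p+2-p\beta}$ to reduce to the deterministic Strichartz (Proposition~\ref{P:strichartz}) and weighted radial Strichartz (Corollary~\ref{C:RS}) bounds, Lemma~\ref{L:commutator} to pass from $L^\infty_x$ to $L^m_x$ with the $N^{4/m}$ loss, and a choice of $m$ large enough that $4/m<s$ makes the dyadic sum converge. The only (cosmetic) difference is the order of operations---the paper first establishes the uniform $L^m_x$ estimate \eqref{E:mL2} and then invokes Lemma~\ref{L:commutator}, while you apply the commutator lemma first and then estimate each piece; the content is identical.
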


\begin{proof} By Strichartz and Corollary~\ref{C:RS},
\[
\|e^{it\Delta}f\|_{L_t^{\frac{2p}{p-2}}L_x^{\frac{4p}{p+2}}} + \||x|^\beta e^{it\Delta} f\|_{L_t^{\frac{2p}{p-2}}L_x^{\frac{4p}{p+2-p\beta}}}\lesssim \|f\|_{L_x^2},
\]
provided $p\geq 4$ (to ensure that $\frac{2p}{p-2}\in(2,4]$) and $0\leq\beta\leq 1+\frac2p$.  Thus, for
\[
\max\{\tfrac{2p}{p-2},\tfrac{4p}{p+2-\beta p}\}\leq m <\infty,
\]
an application of Lemma~\ref{L:weighted} with $r_1 = \frac{4p}{p+2}$ and $r_2=\frac{4p}{p+2-p\beta}$ yields
\begin{equation}\label{E:mL2}
\E\biggl[\|\x^\beta e^{it\Delta}f^\omega\|_{L_t^{\frac{2p}{p-2}}L_x^m(\R\times\R^4)}^{\frac{2p}{p-2}}\biggr] \lesssim \|f\|_{L_x^2}^{\frac{2p}{p-2}}.
\end{equation}

Now given $s>0$, we may choose $m$ large enough so that we also have $s>\frac4m$.  Using \eqref{E:mL2} together with Lemma~\ref{L:commutator}, for $\beta \leq 1$ we find
\begin{align*}
\|\x^\beta e^{it\Delta} f^\omega\|_{L_{t,\omega}^{\frac{2p}{p-2}}L_x^\infty} & \lesssim \sum_{N\geq 1} N^{\frac4m}\|\x^\beta e^{it\Delta} P_N f^\omega\|_{L_{t,\omega}^{\frac{2p}{p-2}}L_x^m} \\
& \lesssim \sum_{N\geq 1} N^{\frac4m} \|P_N f\|_{L_x^2} \lesssim \|f\|_{H^s_x}.
\end{align*}
This completes the proof.\end{proof}

\begin{lemma}\label{L:2nd-bd} Fix $s>\frac56$ and $f\in H^s_{\rad}(\R^4)$.  For $p$ sufficiently large,
\[
\E\biggl[\| \x^{\frac12+\frac3p} e^{it\Delta}\nabla f^\omega\|_{L_t^{\frac{2p}{p-2}}L_x^\infty(\R\times\R^4)}^{\frac{2p}{p-2}}\biggr] \lesssim \|f\|_{H^s_x(\R^4)}^{\frac{2p}{p-2}}.
\]
\end{lemma}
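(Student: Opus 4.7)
The plan is to follow the architecture of Lemma~\ref{L:1st-bd} (Lemma~\ref{L:commutator} + Lemma~\ref{L:weighted} + Corollary~\ref{C:RS}) but to use local smoothing (Lemma~\ref{L:LS}) to recover the derivative that arises when replacing $f$ by $\nabla f$; this is ultimately what forces the threshold $s>\tfrac56$. Setting $q_0:=\tfrac{2p}{p-2}$, applying Lemma~\ref{L:commutator} (with some large $m>4$), the triangle inequality in $\omega$, and Remark~\ref{Remarks}(i) give
\[
\|\langle x\rangle^{\frac12+\frac3p} \nabla e^{it\Delta} f^\omega\|_{L^{q_0}_{t,\omega} L^\infty_x}
\leq \sum_{N\geq 1} N^{4/m} \|\langle x\rangle^{\frac12+\frac3p} e^{it\Delta}(\nabla P_N f)^\omega\|_{L^{q_0}_{t,\omega} L^m_x}.
\]
Applying Lemma~\ref{L:weighted} to $\nabla P_N f$ (with $r_1=2$ and $r_2$ to be chosen) further reduces matters to the deterministic estimates
\[
I_N^{\mathrm{in}} := \|e^{it\Delta} \nabla P_N f\|_{L_t^{q_0} L_x^{2}(\R \times B)}, \qquad
I_N^{\mathrm{out}} := \||x|^{\frac12+\frac3p} e^{it\Delta} \nabla P_N f\|_{L_t^{q_0} L_x^{r_2}(\R \times B^c)}.
\]

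On the unit ball, $\langle x\rangle\sim 1$, so Lemma~\ref{L:LS} furnishes $\|e^{it\Delta} \nabla P_N f\|_{L_{t,x}^2(\R \times B)}\lesssim N^{1/2}\|P_N f\|_{L^2}$, which interpolates in time with $L^2$-conservation $\|e^{it\Delta} \nabla P_N f\|_{L_t^\infty L_x^2}\lesssim N\|P_N f\|_{L^2}$ to produce $I_N^{\mathrm{in}}\lesssim N^{(p+2)/(2p)}\|P_N f\|_{L^2}$, which is essentially $N^{1/2}$ for large $p$.

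Outside the ball the situation is more delicate since $\nabla P_N f$ is not radial. Writing $\nabla = R|\nabla|$ with $R$ the Riesz transform, $|\nabla| P_N f$ is radial, so Corollary~\ref{C:RS} yields, for $q_A\in(2,4]$ and $\beta_0<2(q_A-1)/q_A$,
\[
\||x|^{\beta_0} e^{it\Delta} |\nabla| P_N f\|_{L_t^{q_A} L_x^{r_A}} \lesssim N\|P_N f\|_{L^2}, \qquad r_A=\tfrac{4q_A}{2(q_A-1)-\beta_0 q_A}.
\]
A brief check shows $|x|^{\beta_0 r_A}\in A_{r_A}(\R^4)$, so boundedness of $R$ on the corresponding weighted Lebesgue space lets us replace $|\nabla|P_N f$ by $\nabla P_N f$ on the left. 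Combining with local smoothing $\||x|^{-1/2-\eps} e^{it\Delta} \nabla P_N f\|_{L_{t,x}^2}\lesssim N^{1/2}\|P_N f\|_{L^2}$ (using $\langle x\rangle\sim|x|$ on $B^c$) and interpolating the two weighted mixed-norm estimates via Stein--Weiss, for $\theta\in(0,1)$,
\[
\||x|^{(1-\theta)\beta_0-\theta(\frac12+\eps)} e^{it\Delta} \nabla P_N f\|_{L_t^{q(\theta)} L_x^{r(\theta)}(B^c)} \lesssim N^{1-\theta/2}\|P_N f\|_{L^2},
\]
with $\tfrac{1}{q(\theta)}=\tfrac{1-\theta}{q_A}+\tfrac{\theta}{2}$ and $\tfrac{1}{r(\theta)}=\tfrac{1-\theta}{r_A}+\tfrac{\theta}{2}$. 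Requiring $q(\theta)=q_0$ and the exponent of $|x|$ to equal $\tfrac12+\tfrac3p$, while saturating $\beta_0=2(q_A-1)/q_A$ (up to a tiny perturbation keeping $r_A<\infty$), a direct computation solves this to $q_A=\tfrac{2(p+1)}{p-2}\in(2,4]$ for $p\geq 5$ and $\theta=\tfrac{p-2}{3p}$, yielding $I_N^{\mathrm{out}}\lesssim N^{5/6+1/(3p)}\|P_N f\|_{L^2}$.

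Assembling the pieces and applying Cauchy--Schwarz in $N$,
\[
\|\langle x\rangle^{\frac12+\frac3p} \nabla e^{it\Delta} f^\omega\|_{L^{q_0}_{t,\omega} L^\infty_x} \lesssim \sum_{N\geq 1} N^{4/m+5/6+1/(3p)}\|P_N f\|_{L^2} \lesssim \|f\|_{H^s_x},
\]
valid provided $\tfrac{4}{m}+\tfrac56+\tfrac{1}{3p}<s$, which can be arranged for every $s>\tfrac56$ by first choosing $p$ then $m$ sufficiently large. The main obstacle is exactly the interpolation step above: the coupled demand of matching the temporal exponent $q_0$, the spatial weight $\tfrac12+\tfrac3p$, and the Strichartz constraint $\beta_0\leq 2(q_A-1)/q_A$ forces $q_A\to 2^+$ as $p\to\infty$, capping the local-smoothing gain at $\tfrac16$ and producing the limiting $N$-power $\tfrac56$. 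This is the source of the regularity threshold in the hypothesis.
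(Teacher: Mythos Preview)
Your proof is correct and follows the same architecture as the paper's: reduce via Lemma~\ref{L:commutator} to weighted $L^m_x$ bounds for each Littlewood--Paley piece, apply Lemma~\ref{L:weighted} to pass to deterministic estimates split over $B$ and $B^c$, handle $B$ by interpolating local smoothing against $L^\infty_tL^2_x$, and handle $B^c$ by H\"older-interpolating a weighted radial Strichartz estimate against local smoothing.  Your parameters $(q_A,\theta)=\bigl(\tfrac{2(p+1)}{p-2},\tfrac{p-2}{3p}\bigr)$ coincide with the paper's choices (after swapping $\theta\leftrightarrow 1-\theta$ and sending the paper's auxiliary $\eps\to0$), and both routes land on the same $N^{5/6+1/(3p)}$ power that forces $s>\tfrac56$.

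The one genuine point of departure is your handling of the non-radiality of $\nabla P_Nf$ on $B^c$.  The paper simply invokes Proposition~\ref{P:RS} on $\nabla P_Nf$ to obtain
\[
\||x|^{\frac{2(q-1)}{q}}e^{it\Delta}\nabla P_Nf\|_{L^q_tL^\infty_x}\lesssim N\|P_Nf\|_{L^2_x},
\]
without remarking that Proposition~\ref{P:RS} is stated only for radial data.  You instead apply Corollary~\ref{C:RS} to the radial function $|\nabla|P_Nf$ at a large but \emph{finite} spatial exponent $r_A$, and then transfer to $\nabla P_Nf=R\,|\nabla|P_Nf$ via the weighted boundedness of the Riesz transforms, checked through the Muckenhoupt condition $|x|^{\beta_0 r_A}\in A_{r_A}$ (which holds since $\beta_0<4(1-1/r_A)$ for $r_A$ large).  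This furnishes an explicit justification of a step the paper leaves implicit; the paper's shortcut can alternatively be read as an appeal to the extension of the radial kernel bound~\eqref{rde} to the $\ell=1$ spherical-harmonic sector, which is true but not stated.  The cost of your route is carrying the finite $r_A$ (and the small $\eps$ in the local-smoothing weight) through the H\"older step, but these perturbations are absorbed in the final choice of $m$ and $p$ exactly as in your closing paragraph.
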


\begin{proof} Let $\eps>0$ be a small parameter to be chosen below.  An application of Lemma~\ref{L:weighted} with $r_1=2$ and $r_2=\frac{2}{1-\theta}$, where
\[
\theta=\tfrac{2}{3+2\eps}(1+\tfrac1p +\eps)
\]
yields
\begin{align*}
\E\biggl[\| \x^{\frac12+\frac3p} &e^{it\Delta}\nabla P_N f^\omega\|_{L_t^{\frac{2p}{p-2}}L_x^m}^{\frac{2p}{p-2}}\biggr] \\
&\lesssim\|e^{it\Delta} \nabla P_N f\|_{L_t^{\frac{2p}{p-2}}L_x^2(\R\times B)}^{\frac{2p}{p-2}} + \||x|^{\frac12+\frac3p}e^{it\Delta}\nabla P_N f\|_{L_t^{\frac{2p}{p-2}}L_x^{r_2}(\R\times B^c)}^{\frac{2p}{p-2}},
\end{align*}
provided $m\geq \max\{2, \frac2{1-\theta}, \frac {2p}{p-2}\}$.

By H\"older's inequality, Lemma~\ref{L:LS}, and Bernstein's inequality:
\begin{equation}\label{E:LS1}
\begin{aligned}
\|e^{it\Delta} \nabla P_N f\|_{L_t^{\frac{2p}{p-2}}L_x^2(\R\times B)} & \lesssim \|e^{it\Delta}\nabla P_Nf\|_{L_{t,x}^2(\R\times B)}^{1-\frac2p} \|e^{it\Delta} \nabla P_Nf\|_{L_t^\infty L_x^2}^{\frac2p} \\
& \lesssim N^{\frac12+\frac1p}\|P_N f\|_{L_x^2}.
\end{aligned}
\end{equation}
On the other hand, setting
\[
q=\tfrac{2(p+1+p\eps)}{(p-2)(1+\eps)},
\]
we may apply H\"older's inequality, Proposition~\ref{P:RS}, and Lemma~\ref{L:LS} (provided we choose $0<\eps\ll 1$ and $p$ large) to get
\begin{equation}\label{E:2nd-bd}
\begin{aligned}
\| |x|^{\frac12+\frac3p}e^{it\Delta}\nabla &P_N f\|_{L_t^{\frac{2p}{p-2}}L_x^{r_2}(\R\times B^c)} \\
& \lesssim  \||x|^{\frac{2(q-1)}{q}}e^{it\Delta} \nabla P_N f\|_{L_t^q L_x^\infty}^\theta
\| \x^{-\frac12-\eps}e^{it\Delta} \nabla P_N f\|_{L_{t,x}^2}^{1-\theta} \\
& \lesssim N^{\frac{5+4\eps}{6+4\eps}+\frac{1}{p(3+2\eps)}}\|P_N f\|_{L_x^2}.
\end{aligned}
\end{equation}

Collecting \eqref{E:LS1} and \eqref{E:2nd-bd}, we find
\begin{equation}\label{E:mH1}
\E\biggl[\| \x^{\frac12+\frac3p} e^{it\Delta}\nabla P_N f^\omega\|_{L_t^{\frac{2p}{p-2}}L_x^m}^{\frac{2p}{p-2}}\biggr] \lesssim \bigl(N^{\frac{5+4\eps}{6+4\eps}+\frac{1}{p(3+2\eps)}}\|P_N f\|_{L_x^2}\bigr)^{\frac{2p}{p-2}}
\end{equation}
for $\eps$ small and $p, m$ large.

We now proceed as in the proof of Lemma~\ref{L:1st-bd}, using \eqref{E:mH1} together with Lemma~\ref{L:commutator} to estimate
\begin{align*}
\| \x^{\frac12+\frac3p} e^{it\Delta}\nabla f^\omega\|_{L_{t,\omega}^{\frac{2p}{p-2}}L_x^\infty} & \lesssim \sum_{N\geq 1} N^{\frac4m+\frac{5+4\eps}{6+4\eps}+\frac{1}{p(3+2\eps)}}\|P_N f\|_{L_x^2} \lesssim \|f\|_{H^s_x},
\end{align*}
provided
\[
s>\tfrac4m + \tfrac{5+4\eps}{6+4\eps}+\tfrac{1}{p(3+2\eps)}.
\]
Note that for any $s>\frac56$, we may choose $\eps$ sufficiently small and $p,m$ sufficiently large to guarantee that this condition holds.
\end{proof}

\begin{lemma}\label{L:3rd-bd} Fix $s>\frac23$ and $f\in H^s_{\rad}(\R^4)$. For $p$ sufficiently large,
\[
\E\biggl[\|\x^{\frac1p}e^{it\Delta}\nabla f^\omega\|_{L_t^{\frac{2p}{p-2}}L_x^4(\R\times\R^4)}^{\frac{2p}{p-2}}\biggr] \lesssim \|f\|_{H_x^s(\R^4)}^{\frac{2p}{p-2}}.
\]
\end{lemma}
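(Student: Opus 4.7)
The plan is to adapt the proof of Lemma~\ref{L:2nd-bd}, choosing new exponents to accommodate the $L_x^4$ target and the smaller weight $\x^{1/p}$. For each dyadic $N\geq 1$, I start by invoking Lemma~\ref{L:weighted} (in the form \eqref{E:weighted2}) applied to $(\nabla P_N f)^\omega$, with $\beta = \frac{1}{p}$, $m=4$, $r_1 = 2$ (inside the unit ball), and $r_2 = 3$ (outside). For $p\geq 4$ this yields
\begin{align*}
\E\Bigl[\|\x^{\frac{1}{p}} e^{it\Delta}(\nabla P_N f)^\omega\|_{L_t^{\frac{2p}{p-2}}L_x^4}^{\frac{2p}{p-2}}\Bigr]^{\frac{p-2}{2p}} & \lesssim \|e^{it\Delta}\nabla P_N f\|_{L_t^{\frac{2p}{p-2}}L_x^2(\R\times B)} \\
& \quad + \||x|^{\frac{1}{p}}e^{it\Delta}\nabla P_N f\|_{L_t^{\frac{2p}{p-2}}L_x^{3}(\R\times B^c)}.
\end{align*}

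For the inside contribution, I would exploit that $\x\sim 1$ on $B$ to replace the restricted $L^2_x(B)$ norm by the weighted norm appearing in local smoothing. Interpolating $\|e^{it\Delta}\nabla P_N f\|_{L_{t,x}^2(B)}\lesssim N^{1/2}\|P_N f\|_{L^2}$ (from Lemma~\ref{L:LS}) with the trivial bound $\|e^{it\Delta}\nabla P_N f\|_{L_t^\infty L_x^2}\lesssim N\|P_N f\|_{L^2}$ via H\"older in $t$ yields an inside bound of $N^{\frac{1}{2}+\frac{1}{p}}\|P_N f\|_{L^2}$, which sits strictly below the $N^{2/3}$ threshold for $p>6$.

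The main task is the outside contribution. Here I take $p\geq 12$ and interpolate, at parameter $\theta=\frac{1}{3}$, between the $L_x^\infty$ endpoint of Corollary~\ref{C:RS} at $q_1 = \frac{2p}{p-6}$, namely
\[
\||x|^{1+\frac{6}{p}}e^{it\Delta}\nabla P_N f\|_{L_t^{\frac{2p}{p-6}}L_x^\infty}\lesssim N\|P_N f\|_{L^2},
\]
and the local smoothing estimate $\|\x^{-\frac12-\eps}e^{it\Delta}\nabla P_N f\|_{L_{t,x}^2}\lesssim N^{1/2}\|P_N f\|_{L^2}$ with $\eps=\frac{3}{2p}$. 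The interpolation identities $\frac{1}{q}=\frac{\theta}{q_1}+\frac{1-\theta}{2}$ and $\frac{1}{r_2}=\frac{1-\theta}{2}$ give exactly $q=\frac{2p}{p-2}$ and $r_2=3$, while the interpolated weight $\theta(1+\frac{6}{p})-(1-\theta)(\frac{1}{2}+\frac{3}{2p})=\frac{1}{p}$ matches the target. Since $\x\sim|x|$ on $B^c$, the pointwise H\"older argument of Lemma~\ref{L:2nd-bd} then produces
\[
\||x|^{\frac{1}{p}}e^{it\Delta}\nabla P_N f\|_{L_t^{\frac{2p}{p-2}}L_x^3(\R\times B^c)}\lesssim N^{\frac{1+\theta}{2}}\|P_N f\|_{L^2} = N^{2/3}\|P_N f\|_{L^2}.
\]

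Combining the two contributions and summing over dyadic $N$ via Minkowski in $L_\omega^{\frac{2p}{p-2}}$ yields
\[
\E\Bigl[\|\x^{\frac{1}{p}}\nabla e^{it\Delta}f^\omega\|_{L_t^{\frac{2p}{p-2}}L_x^4}^{\frac{2p}{p-2}}\Bigr]^{\frac{p-2}{2p}}\lesssim \sum_{N\geq 1} N^{2/3}\|P_N f\|_{L^2}\lesssim \|f\|_{H^s},
\]
which is finite whenever $s>\frac{2}{3}$ by Cauchy--Schwarz in $N$. The main obstacle is the alignment of interpolation parameters in the outside estimate: the constraint $q_1\in(2,4]$ forces $p\geq 12$, and the requirements that the interpolated time exponent equal $\frac{2p}{p-2}$ and the interpolated weight equal $\frac{1}{p}$ precisely pin $\theta=\frac{1}{3}$ when $\eps=\frac{3}{2p}$, which is exactly what produces the critical gain $N^{2/3}$ and hence the threshold $s>\frac{2}{3}$.
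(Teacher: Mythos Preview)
Your proposal is correct and follows essentially the same route as the paper: apply \eqref{E:weighted2} with $m=4$ and $r_1=2$, control the inside via local smoothing as in \eqref{E:LS1}, and control the outside by interpolating the weighted radial Strichartz estimate (Proposition~\ref{P:RS}) against local smoothing, then sum over dyadic frequencies. The only difference is cosmetic: the paper leaves a free parameter $\eps>0$ in the local smoothing weight and chooses $\theta=\tfrac{1}{3+2\eps}(1-\tfrac2p+2\eps)$, $q=\tfrac{2[p(1+2\eps)-2]}{p(1+2\eps)-8-4\eps}$, obtaining an outside exponent $\tfrac{2+2\eps}{3+2\eps}-\tfrac{1}{p(3+2\eps)}$ strictly below $\tfrac23$, whereas you pin $\eps=\tfrac{3}{2p}$ and $\theta=\tfrac13$ to hit the endpoint $N^{2/3}$ exactly; both choices yield the same threshold $s>\tfrac23$.
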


\begin{proof} Let $\eps>0$ be a small parameter to be chosen below.  We apply \eqref{E:weighted2} with $r_1=2$ and $r_2=\frac{2}{1-\theta}$, where
\[
\theta = \tfrac{1}{3+2\eps}(1-\tfrac2p+2\eps).
\]
We estimate the contribution of $B$ using \eqref{E:LS1}.  The contribution of $B^c$ will be estimated as in \eqref{E:2nd-bd}, but with a different choice of exponents.  To be precise, we will now take
\[
q=\tfrac{2[p(1+2\eps)-2]}{p(1+2\eps)-8-4\eps},
\]
which belongs to the range $(2,4]$ for $\eps$ small and $p$ large.  Note that to apply \eqref{E:weighted2} also requires $r_2\leq 4$, which is also satisfied for $\eps$ small and $p$ large.  In this case, the contribution of $B^c$ can be estimated by
\[
\|e^{it\Delta}\nabla P_N f\|_{L_t^{\frac{2p}{p-2}}L_x^{r_2}(R\times B^c)} \lesssim N^{\frac{2+2\eps}{3+2\eps}-\frac{1}{p(3+2\eps)}}\|P_Nf\|_{L_x^2}.
\]

Choosing $\eps$ sufficiently small and $p$ sufficiently large, we can therefore estimate
\[
\|\x^{\frac1p}e^{it\Delta} \nabla f^\omega\|_{L_{t,\omega}^{\frac{2p}{p-2}}L_x^4} \lesssim \|f\|_{H^s_x}
\]
for any $s>\frac23$.
\end{proof}

Collecting the results of Lemmas~\ref{L:1st-bd}, \ref{L:2nd-bd}, and \ref{L:3rd-bd}, we obtain Proposition~\ref{P:STE}.

\section{Well-posedness and scattering for the forced equation}\label{S:wp}

In this section we prove well-posedness and a conditional scattering result for the forced NLS
\begin{equation}\label{fnls}
\begin{cases}
(i\partial_t + \Delta) v = |v+F|^2(v+F), \\
v(t_0)=v_0.
\end{cases}
\end{equation}
We will consider forcing terms $F$ satisfying $(i\partial_t+\Delta)F=0$ and the following bounds:
\begin{equation}\label{lwp-F}
F\in L_t^{\frac{4p}{p+2}}L_x^4(\R\times\R^4)\cap L_t^{\frac{2p}{p-2}}W_x^{1,\infty}(\R\times\R^4)
\end{equation}
for some large, but finite $p$. Note that by Propositions~\ref{P:STE2} and \ref{P:STE}, $F^\omega=e^{it\Delta}f^\omega$ satisfies \eqref{lwp-F} almost surely for $f\in H^s_{\rad}(\R^4)$ with $s>\frac56$ and $p$ sufficiently large.

\begin{proposition}[Local well-posedness]\label{P:lwp} Let $t_0\in\R$, $v_0\in H^1(\R^4)$, and $F$ be a solution to $(i\partial_t + \Delta) F = 0$ satisfying \eqref{lwp-F}.  Suppose $\|v_0\|_{H^1_x}\leq E$.  There exists $\eta_0=\eta_0(E)>0$ so that if $I\ni t_0$ is an open interval such that
\begin{equation}\label{lwp-condition}
\|e^{i(t-t_0)\Delta}v_0\|_{L_t^4 L_x^8(I\times\R^4)} + \|F\|_{L_t^{\frac{4p}{p+2}}L_x^4\cap L_t^{\frac{2p}{p-2}}W_x^{1,\infty}(I\times\R^4)} \leq\eta\leq\eta_0,
\end{equation}
then there exists a unique solution $v\in C_tH^1_x\cap L_t^4L_x^8(I\times\R^4)$ to \eqref{fnls} on $I$.  In particular, for any $v_0\in H^1(\R^4)$ there exists a unique local-in-time solution $v$ to \eqref{fnls}, which extends to a maximal lifespan $\Imax$.

Moreover, if $F(t_0)\in L_x^2$, we have the following blowup/scattering criterion:
\begin{itemize}
\item[(i)] If $\sup\Imax<\infty$, then $\|v\|_{L_t^4 L_x^8((t_0,\sup\Imax)\times\R^4)}=\infty$.
\item[(ii)] If $\sup\Imax=\infty$ and $\|v\|_{L_t^4 L_x^8((t_0,\infty)\times\R^4)}<\infty$, then $v$ scatters forward in time.
\end{itemize}
The analogous statements hold backward in time.

\end{proposition}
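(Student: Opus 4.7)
The plan is to construct $v$ as a fixed point of the Duhamel map
\[
\Phi[v](t) := e^{i(t-t_0)\Delta}v_0 - i\int_{t_0}^t e^{i(t-s)\Delta}\bigl[|v+F|^2(v+F)\bigr](s)\,ds
\]
on a ball of radius $\sim E$ in the space
\[
X(I):=\bigl\{v:\ \|\jb v\|_{L_t^\infty L_x^2(I\times\R^4)}+\|\jb v\|_{L_t^4 L_x^{8/3}(I\times\R^4)}<\infty\bigr\},
\]
noting that $\|v\|_{L_t^4L_x^8(I\times\R^4)}\lesssim \|\jb v\|_{L_t^4L_x^{8/3}(I\times\R^4)}$ by the Sobolev embedding $W^{1,8/3}(\R^4)\hookrightarrow L^8(\R^4)$. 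The first step is to apply Proposition~\ref{P:strichartz} to obtain
\[
\|\jb \Phi[v]\|_{L_t^\infty L_x^2\cap L_t^4L_x^{8/3}(I\times\R^4)}\lesssim \|v_0\|_{H^1}+\|\jb[|v+F|^2(v+F)]\|_{L_t^{4/3}L_x^{8/5}(I\times\R^4)},
\]
reducing everything to a trilinear estimate in a dual Strichartz norm.

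Next, I would expand $|v+F|^2(v+F) = |v|^2v + \mathcal{G}(v,F)$, where $\mathcal{G}$ collects every term containing at least one $F$ (schematically $v^2F$, $vF^2$, $F^3$). The pure piece obeys the standard energy-critical bound
\[
\|\jb(|v|^2v)\|_{L_t^{4/3}L_x^{8/5}}\lesssim \|v\|_{L_t^4L_x^8}^2\|\jb v\|_{L_t^4L_x^{8/3}}.
\]
For each term of $\mathcal{G}$, I would apply fractional Leibniz to distribute $\jb$, then H\"older to place each $v$-factor in one of $\|v\|_{L_t^4L_x^8}$, $\|\jb v\|_{L_t^4L_x^{8/3}}$, or $\|v\|_{L_t^\infty L_x^4}\lesssim \|\jb v\|_{L_t^\infty L_x^2}$, and each $F$-factor (or $\jb F$ whenever the derivative lands there) in either $\|F\|_{L_t^{4p/(p+2)}L_x^4}$ or $\|\jb F\|_{L_t^{2p/(p-2)}L_x^\infty}$. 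When the time exponents do not close exactly, H\"older in $t$ converts the smallness of $\eta$ into a slightly larger exponent at the cost of a factor $|I|^\alpha$ with $\alpha>0$; this is harmless because absolute continuity of the $F$-norms on $\R$ lets us always shrink $I$ to enforce \eqref{lwp-condition} with any desired $\eta\leq\eta_0$. Once $\eta_0$ is taken sufficiently small relative to $E$, one checks in the standard way that $\Phi$ sends the ball $\{\|\jb v\|_{L_t^\infty L_x^2\cap L_t^4L_x^{8/3}(I)}\leq C E\}$ to itself and is a strict contraction with respect to a weaker metric (e.g.\ $L_t^\infty L_x^2\cap L_t^4L_x^8$); the unique fixed point is the desired local solution, which extends by a standard continuation argument to a maximal lifespan $\Imax$.

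For the blow-up/scattering criterion under the hypothesis $F(t_0)\in L^2$ (which gives $F\in L_t^\infty L_x^2$), if $\|v\|_{L_t^4L_x^8((t_0,\sup\Imax)\times\R^4)}<\infty$ then I would partition $(t_0,\sup\Imax)$ into finitely many subintervals $I_j$ on each of which $\|v\|_{L_t^4L_x^8(I_j)}$ and the $F$-norms in \eqref{lwp-condition} are simultaneously small, and iterate the local theory (with $v$ at each endpoint as new initial data) to get uniform Strichartz control of $\jb v$ on $(t_0,\sup\Imax)$. If $\sup\Imax<\infty$, this uniform bound lets us reopen the local theory at the right endpoint, contradicting maximality and proving (i). For (ii), the same partitioning on $(t_0,\infty)$ yields finiteness of $\|\jb((v+F)^3)\|_{L_t^{4/3}L_x^{8/5}(t_0,\infty)}$, so that
\[
e^{-it\Delta}v(t)-e^{-i\tau\Delta}v(\tau) = -i\int_\tau^t e^{-is\Delta}|v+F|^2(v+F)\,ds
\]
is Cauchy in $H^1$ as $\tau,t\to\infty$ by dual Strichartz, giving scattering. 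The main obstacle throughout is the nonlinear estimate for $\mathcal{G}$: since the randomized $F$ is almost surely not in $H^1$, no derivative falling on $F$ may be placed in $L^2_x$, and every such derivative must be absorbed by $\|\jb F\|_{L_t^{2p/(p-2)}L_x^\infty}$. Because the pair $\bigl(L_t^{4p/(p+2)}L_x^4,\,L_t^{2p/(p-2)}L_x^\infty\bigr)$ only interpolates to $L_x^r$ with $r\geq 4$, balancing H\"older exponents so that each factor lies in one of the admitted norms while the spatial and temporal scalings match $L_t^{4/3}L_x^{8/5}$ is the delicate step, and it is precisely this constraint that dictates the specific form of \eqref{lwp-F}.
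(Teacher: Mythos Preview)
Your overall strategy (contraction mapping via Duhamel, then a partition/iteration argument for the blowup/scattering criterion) matches the paper's, but the specific nonlinear estimate you propose has a genuine gap. You commit to a single dual Strichartz pair, $L_t^{4/3}L_x^{8/5}$, and plan to absorb any mismatch by a factor $|I|^\alpha$ via H\"older in time. This does not work for the pure forcing term $F^3$: after Leibniz, $\jb F$ can only sit in $L_x^\infty$, so the remaining $F^2$ would have to supply the full $L_x^{8/5}$, i.e.\ each $F$ would need to lie in $L_x^{16/5}$; but the hypotheses only give $F\in L_x^4\cap L_x^\infty$ (hence $L_x^r$ for $r\ge 4$). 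This is a \emph{spatial} mismatch that no $|I|^\alpha$ factor can repair. Even for the mixed terms $Fv^2$ and $F^2v$ where space does close, the time exponents miss $3/4$ by $O(1/p)$, forcing $|I|^{1/p}$; this undermines the scattering criterion~(ii), where the final subinterval in your partition of $(t_0,\infty)$ is necessarily unbounded. It also weakens the proposition as stated, which asserts existence on \emph{any} interval $I$ meeting \eqref{lwp-condition}, not merely on some bounded subinterval.

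The paper closes these estimates scale-invariantly by using a different dual admissible pair for each term: $L_t^2L_x^{4/3}$ for $|v|^2v$, $L_t^1L_x^2$ for $|F|^2F$, $\bigl(\tfrac{4p}{3p-6},\tfrac{8p}{5p+6}\bigr)$ for $Fv^2$, and $\bigl(\tfrac{p+6}{p+2},\tfrac{2(p+6)}{p+10}\bigr)$ for $F^2v$. With these choices, H\"older closes exactly (no $|I|^\alpha$), so the contraction works on any $I$ satisfying \eqref{lwp-condition}, and the Cauchy-in-$H^1$ argument for~(ii) goes through on the unbounded tail. One further point: the paper's iteration for the blowup criterion uses the mass bound $\|v\|_{L_t^\infty L_x^2}\lesssim \|v_0\|_{L^2}+\|F(t_0)\|_{L^2}$, obtained from mass conservation for $u=v+F$ (which solves \eqref{nls}); this is where the hypothesis $F(t_0)\in L^2$ enters, and it is needed because the $Fv^2$ estimate contains a factor $\|v\|_{L_t^\infty L_x^2}^{2/p}$.
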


\begin{proof}  Without loss of generality assume $t_0=0$.  Define
\[
[\Phi v](t) = e^{it\Delta}v_0 - i\int_0^t e^{i(t-s)\Delta}\bigl[|v(s)+F(s)|^2(v(s)+F(s))\bigr]\,ds.
\]
Let $\eta>0$ to be chosen below and let $I\ni 0$ be a time interval as in \eqref{lwp-condition}. Note that for any $v_0\in H^1_x$, such an interval exists by Sobolev embedding, Strichartz estimates, and the monotone convergence theorem.

In the following, we take space-time norms over $I\times\R^4$.  Define
\[
X=\{v:I\times\R^4\to \C:\,\|v\|_{L_t^\infty H_x^1} \leq 2CE,\quad \|v\|_{L_t^4 L_x^8}\leq 2C\eta\}.
\]
Here $C$ is a constant that accounts for implicit constants appearing in Strichartz estimates, Sobolev embedding, etc.  We equip $X$ with the $L_t^\infty L_x^2$ metric.

We write
\[
|v+F|^2(v+F)=|v|^2 v + |F|^2 F + 2|v|^2 F + v^2\bar F + 2|F|^2 v + F^2 \bar v.
\]
To estimate the nonlinearity, we note that for $p>6$ the pair $(\frac{4p}{3p-6},\frac{8p}{5p+6})$ is dual admissible, while for $p>2$ the pair $(\frac{p+6}{p+2},\frac{2(p+6)}{p+10})$ is also dual admissible.  Using the product rule and H\"older's inequality, we estimate
\begin{align*}
\| \jb(|v|^2 v)\|_{L_t^2 L_x^{\frac43}}&\lesssim \|v\|_{L_t^4 L_x^8}^2 \|\jb v\|_{L_t^\infty L_x^2}, \\
\| \jb(|F|^2 F)\|_{L_t^1 L_x^2} &\lesssim \| F\|_{L_t^{\frac{4p}{p+2}}L_x^4}^2 \|F\|_{L_t^{\frac{2p}{p-2}}W_x^{1,\infty}}, \\
\| \jb(Fv^2)\|_{L_t^{\frac{4p}{3p-6}}L_x^{\frac{8p}{5p+6}}}&\lesssim \|F\|_{L_t^{\frac{2p}{p-2}}W_x^{1,\infty}}\|\jb v\|_{L_t^\infty L_x^2}\|v\|_{L_t^\infty L_x^2}^{\frac2p}\|v\|_{L_t^4 L_x^8}^{1-\frac2p}, \\
\| \jb(F^2 v)\|_{L_t^{\frac{p+6}{p+2}}L_x^{\frac{2(p+6)}{p+10}}}& \lesssim \|F\|_{L_t^{\frac{2p}{p-2}}W_x^{1,\infty}}^{\frac{2p+4}{p+6}}\|F\|_{L_t^{\frac{4p}{p+2}}L_x^4}^{\frac{8}{p+6}}\|\jb v\|_{L_t^\infty L_x^2}.
\end{align*}
Thus, an application of Strichartz shows that for $v\in X$,
\begin{align*}
\|\Phi v\|_{L_t^\infty H_x^1}\lesssim E + \eta^2 E + \eta^3+ \eta^{2-\frac2p} E^{1+\frac2p} \leq 2CE
\end{align*}
for $\eta\leq\eta_0(E)$ small. Similarly, using $\dot H^{1,\frac83}(\R^4)\hookrightarrow L^8(\R^4)$, we have
\[
\|\Phi v\|_{L_t^4 L_x^8} \lesssim \eta +  \eta^2 E + \eta^3+ \eta^{2-\frac2p} E^{1+\frac2p} \leq 2C\eta
\]
for $\eta\leq\eta_0(E)$ small.  Thus $\Phi:X\to X$.

Next, note that
\[
\bigl||v+F|^2(v+F)-|w+F|^2(w+F)\bigr| \lesssim |v-w|\bigl(|v|^2+|w|^2+|F|^2).
\]
Estimating essentially as above, we find
\[
\|\Phi v - \Phi w \|_{L_t^\infty L_x^2} \lesssim \eta^{2}\|v-w\|_{L_t^\infty L_x^2}
\]
for any $v,w\in X$.  Thus $\Phi$ is a contraction for $\eta\leq\eta_0(E)$ small and we deduce the existence of a solution on $I$, which may then be extended to its maximal lifespan $\Imax$.

Note that since $F$ solves the linear Schr\"odinger equation, $u:= F+v$ solves \eqref{nls} on $\Imax\times\R^4$ with $u(0)=v_0+F(0)$. Thus if $F(0)\in L_x^2$, then by the conservation of mass for \eqref{nls} and the triangle inequality we get
\begin{align}\label{mass v}
\|v\|_{L_t^\infty L_x^2(\Imax\times\R^4)}\lesssim \|v_0\|_{L_x^2} + \|F(0)\|_{L_x^2}.
\end{align}

Next suppose toward a contradiction that $\sup \Imax <\infty$ but
\begin{equation}\label{bs-contra1}
\|v\|_{L_t^4 L_x^8((0,\sup\Imax)\times\R^4)}<\infty.
\end{equation}
Fix $\eps>0$ to be chosen below.  Using \eqref{bs-contra1} and \eqref{lwp-F}, we may decompose $(0,\sup \Imax)$ into finitely many intervals $I_j$ so that
\[
\|v\|_{L_t^4 L_x^8(I_j\times\R^4)} +  \|F\|_{L_t^{\frac{4p}{p+2}}L_x^4\cap L_t^{\frac{2p}{p-2}}W_x^{1,\infty}(I_j\times\R^4)}<\eps
\]
for each $j$.  Using the nonlinear estimates above, we find
\[
\bigl|\|v\|_{L_t^\infty H_x^1(I_1\times\R^4)}-\|v_0\|_{H^1_x} \bigr|\lesssim + \eps^3 + (\eps^2 + \eps^{2-\frac2p}\|v\|_{L_t^\infty L_x^2(I_1\times\R^4)}^{\frac2p})\|v\|_{L_t^\infty H_x^1(I_1\times\R^4)}.
\]
Thus, recalling \eqref{mass v} and choosing $\eps$ sufficiently small compared to $\|v_0\|_{H^1_x}$ and $\|F(0)\|_{L_x^2}$, we deduce
\[
\|v\|_{L_t^\infty H_x^1(I_1\times\R^4)} \leq 2\|v_0\|_{H^1_x}.
\]
We can repeat this argument on $I_2$ (with the same choice of $\eps$) to deduce a bound of $4\|v_0\|_{H^1}$.  By induction,
\[
\|v\|_{L_t^\infty H_x^1((0,\sup\Imax)\times\R^4)} \lesssim 2^{C(\eps)}\|v_0\|_{H^1_x}.
\]
Using this bound and \eqref{bs-contra1}, the nonlinear estimates then imply
\[
\biggl\|\int_{t_0}^t e^{i(t-s)\Delta}\bigl[|v(s)+F(s)|^2(v(s)+F(s)\bigr]\,ds\biggr\|_{L_t^4 L_x^8((t_0,\sup\Imax)\times\R^4)}\lesssim 1
\]
uniformly in $t_0\in(0,\sup\Imax)$.  Thus, by the Duhamel formula, the triangle inequality, and monotone convergence,
\[
\lim_{t_0\to \sup\Imax}\|e^{i(t-t_0)\Delta}v(t_0)\|_{L_t^4 L_x^8((t_0,\sup\Imax)\times\R^4)}=0.
\]
In particular, there exists $\delta>0$ so that
\[
\|e^{i(t-t_0)\Delta}v(t_0)\|_{L_t^4 L_x^8((t_0-\delta,\sup\Imax+\delta)\times\R^4)}<\tfrac12\eta_0,
\]
where $\eta_0=\eta_0(\|v\|_{L_t^\infty H_x^1((0,\sup\Imax)\times\R^4)})$ is the same as in the statement of local well-posedness. However, this implies that the solution $v$ extends beyond $\sup\Imax$, a contradiction.

Finally, suppose that $\sup\Imax=\infty$ and $v\in L_t^4 L_x^8((0,\infty)\times\R^4)$.  Repeating the arguments just given, we can deduce that $v\in L_t^\infty H_x^1((0,\infty)\times\R^4)$.  An application of Strichartz combined with the observation that
\[
 \|v\|_{L_t^4 L_x^8((s,t)\times\R^4)} + \|F\|_{L_t^{\frac{2p}{p-2}}W_x^{1,\infty}((s,t)\times\R^4)}\to 0\qtq{as}s,t\to\infty,
\]
yields that $e^{-it\Delta}v(t)$ is Cauchy in $H_x^1$ as $t\to\infty$.
\end{proof}

Our next goal is a conditional scattering result for \eqref{fnls}; see Proposition~\ref{P:v-scatter}.  As described in the introduction, this relies on a stability theory for \eqref{fnls}, which we elaborate next. 

\begin{lemma}[Short-time stability]\label{L:stab} Let $I\ni t_0$ be a time interval and let $v_0\in H^1(\R^4)$ with $\|v_0\|_{H_x^1} \leq E$. Suppose $v:I\times\R^4\to \C$ is a solution to \eqref{fnls} where $v(t_0)=v_0$ and $F$ is a solution to $(i\partial_t+\Delta)F=0$ satisfying \eqref{lwp-F}.  Suppose $u_0\in H^1(\R^4)$ satisfies
\[
\|v_0-u_0\|_{H^1_x}\leq \eps
\]
for some $0<\eps<\eps_0$.  Let $u$ be the solution to \eqref{nls} with $u(t_0)=u_0$ and suppose
\[
\|u\|_{L_t^4 L_x^8(I\times\R^4)}\leq\delta.
\]
Finally, suppose
\[
\|F\|_{L_t^{\frac{2p}{p-2}}W_x^{1,\infty}(I\times\R^4)} + \|F\|_{L_t^{\frac{4p}{p+2}}L_x^4(I\times\R^4)} \leq\eps.
\]
Then for $\eps_0, \delta$ sufficiently small depending on $E$,
\[
\|\jb(v-u)\|_{L_t^\infty L_x^2 \cap L_t^4 L_x^{\frac83}(I\times\R^4)} \leq C(E)\eps.
\]
\end{lemma}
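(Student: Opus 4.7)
The plan is to write $w := v - u$, so that $w(t_0) = v_0 - u_0$ and
\[
(i\partial_t + \Delta) w = N_2 + N_1,
\]
where $N_2 := |u+w|^2(u+w) - |u|^2 u$ and $N_1 := |u+w+F|^2(u+w+F) - |u+w|^2(u+w)$ satisfy the pointwise bounds $|N_2| \lesssim |w|(|u|+|w|)^2$ and $|N_1| \lesssim |F|(|u+w|+|F|)^2$. The goal is to bound
\[
X := \|\jb w\|_{L_t^\infty L_x^2 \cap L_t^4 L_x^{8/3}(I\times\R^4)}
\]
by $C(E)\eps$ via a Strichartz bootstrap.

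As a preliminary step, I would note that $\|u_0\|_{H^1}\le E+\eps$ together with $\|u\|_{L_t^4 L_x^8}\le\delta$ yields $\|\jb u\|_{L_t^\infty L_x^2 \cap L_t^4 L_x^{8/3}} \lesssim E$ for $\delta=\delta(E)$ small, via Strichartz and the bound $\|\jb(|u|^2 u)\|_{L_t^2 L_x^{4/3}} \lesssim \|u\|_{L_t^4 L_x^8}^2 \|\jb u\|_{L_t^\infty L_x^2}$. Applying Strichartz to the Duhamel formula for $w$ then gives
\[
X \lesssim \eps + \|\jb N_2\|_{L_t^2 L_x^{4/3}(I\times\R^4)} + Y,
\]
where $Y$ denotes the sum of the three mixed dual-admissible norms of $\jb N_1$ that correspond, with $u+w$ in place of $v$, to the terms $v^2 F$, $F^2 v$, and $|F|^2 F$ estimated in the proof of Proposition~\ref{P:lwp}.

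For $N_2$ I would apply the product rule together with H\"older and the Sobolev embedding $\dot W^{1,8/3}(\R^4) \hookrightarrow L^8(\R^4)$ (which gives $\|w\|_{L_t^4 L_x^8} \lesssim X$), distributing the three factors among $L_t^4 L_x^8,L_t^4 L_x^8,L_t^\infty L_x^2$ so that the resulting space is the dual-admissible $L_t^2 L_x^{4/3}$. Using the a priori bound on $u$, this produces $\|\jb N_2\|_{L_t^2 L_x^{4/3}} \lesssim_E (\delta + X) X$. For $N_1$ I would reuse the three nonlinear estimates of Proposition~\ref{P:lwp} verbatim with $v$ replaced by $u+w$; combined with the smallness $\|F\|_{L_t^{2p/(p-2)} W^{1,\infty}} + \|F\|_{L_t^{4p/(p+2)} L_x^4} \le \eps$, they yield
\[
Y \lesssim \eps^3 + \eps^2(E+X) + \eps(E+X)^{1+\tfrac{2}{p}}(\delta+X)^{1-\tfrac{2}{p}} \lesssim_E \eps,
\]
as long as $\delta+X$ stays bounded by a constant.

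Collecting the bounds gives $X \le C_E\eps + C_E(\delta+X)X$, which I would close by a continuity argument: since $T \mapsto \|\jb w\|_{L_t^\infty L_x^2 \cap L_t^4 L_x^{8/3}([t_0,T]\times\R^4)}$ is continuous and vanishes at $T=t_0$, choosing $\eps_0$ and $\delta$ sufficiently small depending on $E$ ensures that $C_E(\delta+X) \le \tfrac12$ persists throughout $I$, and absorbing this term on the left yields $X \le 2C_E\eps$. The analytic estimates themselves are routine — the $F$-bearing terms were already handled inside Proposition~\ref{P:lwp}, and the $N_2$ bounds are the standard cubic Strichartz estimates for 4D energy-critical NLS. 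The main obstacle is really the bookkeeping: one must carefully align dual-admissible Strichartz pairs for every mixed $u,w,F$ term so that all final constants depend only on $E$ and so that the continuity trap closes.
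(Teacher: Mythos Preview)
Your proposal is correct and follows essentially the same route as the paper: set $w=v-u$, bound $\|\jb u\|_{L_t^\infty L_x^2\cap L_t^4 L_x^{8/3}}\lesssim E$ first, split the forcing into the $F$-free difference $|v|^2v-|u|^2u$ (your $N_2$) and the $F$-bearing error (your $N_1$), reuse the nonlinear estimates from Proposition~\ref{P:lwp} for the latter, and close via a continuity argument. The only cosmetic difference is that the paper writes the $F$-terms as $\eps^3+\eps^2\|\jb v\|_S+\eps\|\jb v\|_S^2$ and then substitutes $\|\jb v\|_S\lesssim E+\|\jb w\|_S$, whereas you track $(\delta+X)$ and $(E+X)$ separately; either bookkeeping leads to the same bootstrap inequality.
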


\begin{proof} Without loss of generality, assume $t_0=0=\inf I$. In the following, all space-time norms are taken over $I\times\R^4$.  Define $w=v-u$ and set $S=L_t^\infty L_x^2 \cap L_t^4 L_x^{\frac83}$.

Standard continuity arguments combined with an application of the Strichartz inequality yield
\[
\|\jb u\|_S \lesssim E,
\]
for $\eps_0,\delta$ sufficiently small depending on $E$.  Thus, using the equation for $w$, the nonlinear estimates from the local theory, and the hypotheses of the lemma, we get
\begin{align*}
\|\jb w\|_S & \lesssim \|v_0\!-\!u_0\|_{H^1_x} + \eps^3 + \eps^2\|\jb v\|_S + \eps\|\jb v\|_S^2 + \|\jb\bigl(|v|^2 v - |u|^2 u\bigr)\|_{L_t^2 L_x^{\frac43}} \\
& \lesssim C(E)\eps + \eps^2\|\jb w\|_S + \eps\|\jb w\|_S^2 + \|\jb\bigl(|v|^2 v\!\!-\!\!|u|^2 u\bigr)\|_{L_t^2 L_x^{\frac43}}.
\end{align*}
Using $L_t^4 \dot H_x^{1,\frac83}\hookrightarrow L_t^4 L_x^8$,
\begin{align*}
\|\jb\bigl(|v|^2 v - |u|^2 u)\|_{L_t^2 L_x^{\frac43}} & \lesssim \|\jb w\|_{S}\bigl(\|\jb w\|_S^2 + \|u\|_{L_t^4 L_x^8}\|\jb u\|_{S}\bigr) \\
& \lesssim \delta E \|\jb w\|_{S} + \|\jb w\|_S^3.
\end{align*}
Combining the estimates above and choosing $\delta,\eps_0$ sufficiently small depending on $E$, a standard continuity argument yields the result.
\end{proof}

\begin{lemma}[Long-time stability]\label{L:stab2} Let $I\ni t_0$ be a time interval and let $v_0\in H^1(\R^4)$ with $\|v_0\|_{H^1_x}\leq E$. Let $u$ be the solution to \eqref{nls} with $u(t_0)=v_0$.  Suppose that
\[
\|u\|_{L_t^4 L_x^8(I\times\R^4)} \leq L.
\]
Then there exists $\eps_1=\eps_1(E,L)>0$ so that if $F$ is a solution to $(i\partial_t+\Delta)F=0$ satisfying
\begin{equation}\label{stab-F}
\|F\|_{L_t^{\frac{2p}{p-2}}W_x^{1,\infty}(I\times\R^4)} + \|F\|_{L_t^{\frac{4p}{p+2}}L_x^4(I\times\R^4)} \leq\eps
\end{equation}
for some $0<\eps\leq\eps_1$, then there exists a unique solution $v$ to \eqref{fnls} with $v(t_0)=v_0$ on $I\times\R^4$.  Moreover,
\[
\|v\|_{L_t^4 L_x^8(I\times\R^4)}\leq C(E,L)<\infty.
\]
\end{lemma}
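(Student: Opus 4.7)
The plan is to iterate the short-time stability result, Lemma~\ref{L:stab}, on a finite partition of $I$ dictated by the scattering size $L$ of the reference solution $u$.

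First I would upgrade the hypothesis $\|u\|_{L_t^4L_x^8(I\times\R^4)}\le L$ to a uniform $H^1$ bound: a standard Strichartz/continuity argument, using the $H^1$ subcritical nature of the pieces of the nonlinearity on a small interval and the fact that $u$ solves the pure energy-critical problem with $\|u(t_0)\|_{H^1_x}\le E$, yields $\|\jb u\|_{L_t^\infty L_x^2\cap L_t^4 L_x^{8/3}(I\times\R^4)}\le \tilde E(E,L)$. With this in hand, let $\delta=\delta(\tilde E)$ and $\eps_0=\eps_0(\tilde E)$ denote the thresholds provided by Lemma~\ref{L:stab} at energy level $\tilde E$. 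Partition $I$ into $J=J(E,L)$ consecutive intervals $I_1,\ldots,I_J$ with common endpoints $t_0<t_1<\cdots<t_J$, chosen so that $\|u\|_{L_t^4L_x^8(I_j\times\R^4)}\le\delta$ on each piece; this is possible because $\|u\|_{L_t^4L_x^8(I\times\R^4)}\le L$.

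Next, I would construct $v$ inductively piece by piece. On $I_1$, the initial data coincide ($\|v(t_0)-u(t_0)\|_{H^1_x}=0$), and the global smallness \eqref{stab-F} implies the smallness of $F$ on $I_1$, so Lemma~\ref{L:stab} provides $v$ on $I_1$ with
\[
\|\jb(v-u)\|_{L_t^\infty L_x^2(I_1\times\R^4)} \le C(\tilde E)\eps.
\]
Suppose by induction that $v$ has been constructed on $[t_0,t_{j}]$ and $\|v(t_j)-u(t_j)\|_{H^1_x}\le \eps_j$. Provided $\eps_j\le \eps_0(\tilde E)$, Lemma~\ref{L:stab} applied on $I_{j+1}$ (with reference solution $u$ restarted at $t_j$ and initial data $v(t_j)$; the smallness of $F$ on $I_{j+1}$ is inherited from \eqref{stab-F}) produces $v$ on $I_{j+1}$ satisfying $\|v(t_{j+1})-u(t_{j+1})\|_{H^1_x}\le C(\tilde E)(\eps_j+\eps)$. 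Thus $\eps_{j+1}\le C(\tilde E)(\eps_j+\eps)$ with $\eps_1\le C(\tilde E)\eps$, and iterating gives $\eps_j\le [2C(\tilde E)]^j\eps$. Choosing
\[
\eps_1=\eps_1(E,L):=\min\bigl\{\eps_0(\tilde E),\ [2C(\tilde E)]^{-J}\eps_0(\tilde E)\bigr\},
\]
all the inductive hypotheses are maintained for $j=1,\ldots,J$, and we obtain $v$ on all of $I$.

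Finally, summing the bounds $\|\jb(v-u)\|_{L_t^4L_x^{8/3}(I_j\times\R^4)}\le C(\tilde E)\eps_j$ over the $J$ pieces and using Sobolev embedding $\dot H_x^{1,8/3}\hookrightarrow L_x^8$ gives
\[
\|v\|_{L_t^4L_x^8(I\times\R^4)}\le \|u\|_{L_t^4L_x^8(I\times\R^4)}+\sum_{j=1}^{J}C(\tilde E)\eps_j\le L+C(E,L),
\]
while uniqueness follows from the contraction mapping principle underlying Lemma~\ref{L:stab}. The main technical delicacy is the $j$-th step of the induction: one must ensure the $H^1$-size of the true difference $v(t_j)-u(t_j)$ does not saturate the threshold $\eps_0(\tilde E)$ before all $J$ pieces are exhausted, which forces the doubly-exponential dependence $\eps_1\sim [2C(\tilde E)]^{-J(E,L)}\eps_0(\tilde E)$ but is otherwise harmless because $J$ is determined once and for all by $E$ and $L$.
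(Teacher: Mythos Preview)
Your proposal is correct and follows essentially the same route as the paper: partition $I$ into $J=J(E,L)$ pieces on which $\|u\|_{L_t^4L_x^8}\le\delta$, iterate Lemma~\ref{L:stab}, and track the geometric growth of $\|v(t_j)-u(t_j)\|_{H^1_x}$ so that the threshold $\eps_0$ is never saturated. Two minor remarks: the paper obtains the uniform $H^1$ bound on $u$ directly from conservation of mass and energy (so it depends only on $E$, not $L$), which is simpler than the Strichartz/continuity argument you sketch; and you should make explicit that $\|v(t_j)\|_{H^1_x}\le\|u(t_j)\|_{H^1_x}+\eps_j\le 2\tilde E$, so that on each subinterval Lemma~\ref{L:stab} is invoked at the fixed energy level $2\tilde E$ (the paper does this with $2C_0(E)$).
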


\begin{proof} Assume without loss of generality that $t_0=0=\inf I$.  By the local theory, it suffices to establish the $L_t^4 L_x^8$ bound for $v$ as an \emph{a priori} estimate.  Note that by conservation of mass and energy, we may assume $\|u\|_{L_t^\infty H_x^1} \leq C_0(E)$.

Choose $\delta = \delta(2C_0(E))$ as in Lemma~\ref{L:stab} and divide $I$ into $J=J(E,L)$ subintervals $I_j=[t_j,t_{j+1}]$ so that
\[
\|u\|_{L_t^4 L_x^8(I_j\times\R^4)} \leq \delta
\]
for each $j$.

We claim that if we choose $\eps_1=\eps_1(E,L)$ sufficiently small and assume \eqref{stab-F}, then there exists $C_j\geq 1$ so that
\begin{equation}\label{stab-induction}
\|v(t_j)-u(t_j)\|_{H_x^1}\leq C_j\eps\leq\eps_0\qtq{and}\|v(t_j)\|_{H_x^1}\leq 2C_0(E)\quad\text{for each $j$,}
\end{equation}
where $\eps_0=\eps_0(2C_0(E))$ is as in Lemma~\ref{L:stab}.

Note that \eqref{stab-induction} holds trivially for $j=0$.  Now suppose it holds for each $0\leq k\leq j-1$; we will prove it holds at $j$.  Using the Duhamel formula and the inductive hypothesis, and estimating as in Lemma~\ref{L:stab}, we get
\begin{align*}
\|v(t_j)-u(t_j)\|_{H_x^1}  \lesssim \||v+F|^2(v+F)-|u|^2 u\|_{N([0,t_j])} \leq C(E)\sum_{k=0}^{j-1} C_k\eps.
\end{align*}
Thus we may define $C_j$ inductively with $C_0=1$ and $C_j=C(E)\sum_{k=0}^{j-1}C_k$.  Choosing $\eps_1=\eps_1(E,L)$ sufficiently small, we can also ensure that
\[
\sup_{0\leq j\leq J} C_j\eps \leq\eps_0(2C_0(E)).
\]
Then by the triangle inequality,
\[
\|v(t_j)\|_{H_x^1(\R^4)} \leq C_0(E)+C_j\eps \leq 2C_0(E),
\]
for $\eps\leq\eps_1(E,L)$ small enough.  This completes the induction and settles \eqref{stab-induction}.

We may therefore apply Lemma~\ref{L:stab} on each $I_j$, yielding $L_t^4 L_x^8$ bounds for $v$.  Summing up these bounds completes the proof.  \end{proof}

With Lemma~\ref{L:stab2} in hand, we are now in a position to prove the following: 

\begin{proposition}[$H^1_x$ bounds imply scattering]\label{P:v-scatter} Let $v_0\in H^1(\R^4)$ and let $F$ be a solution to $(i\partial_t+\Delta) F = 0$ satisfying \eqref{lwp-F}.  Let $v:\Imax\times\R^4\to\C$ be the maximal-lifespan solution to \eqref{fnls} with $v(0)=v_0$.  Suppose
\begin{equation}\label{bounded-energy}
\sup_{t\in(0,\sup\Imax)}\|v(t)\|_{H_x^1}\leq E<\infty.
\end{equation}
Then $\sup\Imax=\infty$ and $v$ scatters as $t\to\infty$.  The analogous statements hold backward in time.
\end{proposition}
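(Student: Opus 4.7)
The strategy is to use the long-time stability result (Lemma~\ref{L:stab2}) iteratively to upgrade the assumed uniform $H^1$-bound on $v$ into a global $L^4_t L^8_x$ spacetime bound, and then deduce scattering by a standard Cauchy-sequence argument in the Duhamel formula. I argue forward in time; the backward-in-time statement is analogous.

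Let $L = L(E)$ denote the $L^4_t L^8_x$ spacetime bound from Theorem~\ref{T:EC} for solutions to \eqref{nls} with $H^1$-data of norm at most $E$ (the energy is controlled by a function of the $H^1$-norm via Sobolev embedding), and let $\eps_1 = \eps_1(E,L)$ be the smallness threshold from Lemma~\ref{L:stab2}. By hypothesis \eqref{lwp-F}, one can partition $[0,\infty)$ into finitely many consecutive intervals $I_k = [t_{k-1},t_k]$, $k = 1,\ldots,K$ (with $t_0 = 0$ and $I_K$ unbounded), such that
$$
\|F\|_{L_t^{2p/(p-2)}W_x^{1,\infty}(I_k\times\R^4)} + \|F\|_{L_t^{4p/(p+2)}L_x^4(I_k\times\R^4)} \leq \eps_1 \qtq{for each} k.
$$

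I claim by induction on $k$ that $[0,t_k] \subset \Imax$ with $\|v\|_{L^4_t L^8_x([0,t_k]\times\R^4)} \leq k\,C(E,L)$. Given control up to $t_{k-1}$, hypothesis \eqref{bounded-energy} yields $\|v(t_{k-1})\|_{H^1_x}\leq E$, so the solution $u_k$ of \eqref{nls} with $u_k(t_{k-1}) = v(t_{k-1})$ satisfies $\|u_k\|_{L_t^4 L_x^8(I_k\times\R^4)}\leq L$ by Theorem~\ref{T:EC}. Applying Lemma~\ref{L:stab2} on $I_k$ with initial data $v(t_{k-1})$ produces a solution $\tilde v$ of \eqref{fnls} on all of $I_k$ with $\|\tilde v\|_{L^4_t L^8_x(I_k\times\R^4)} \leq C(E,L)$, and the uniqueness clause of Proposition~\ref{P:lwp} identifies $\tilde v$ with $v$ wherever both are defined, extending $v$ through $I_k$. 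After $K$ applications — the last one on the unbounded interval $I_K$ — we conclude $\sup\Imax=\infty$ and
$$
\|v\|_{L_t^4 L_x^8([0,\infty)\times\R^4)} \leq K\,C(E,L) < \infty.
$$

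Scattering then follows from the Duhamel formula
$$
e^{-it\Delta}v(t) - e^{-is\Delta}v(s) = -i\int_s^t e^{-i\tau\Delta}\bigl[|v+F|^2(v+F)\bigr]\,d\tau.
$$
The nonlinear estimates assembled in the proof of Proposition~\ref{P:lwp}, combined with Strichartz, bound the $H^1_x$-norm of the right-hand side by quantities involving $\|v\|_{L_t^4 L_x^8((s,t)\times\R^4)}$, $\|v\|_{L_t^\infty H_x^1((s,t)\times\R^4)}$, and the $F$-norms on $(s,t)$. The global bounds on $v$ together with \eqref{lwp-F} force these tails to vanish as $s,t\to\infty$, so $\{e^{-it\Delta}v(t)\}$ is Cauchy in $H^1_x$ and converges to the scattering state $v_+$.

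The only subtlety worth flagging is that the number $K$ of intervals in the partition must depend solely on $E$ and the (fixed, global) $F$-norms, not on any a priori unknown quantity associated to $v$; this is immediate because $\eps_1$ is determined by $E$ and $L(E)$, both fixed at the outset. Handling the unbounded final interval $I_K$ is also harmless, since Lemma~\ref{L:stab2} admits arbitrary time intervals and promotes the induction all the way to $t = \infty$ in a single final application.
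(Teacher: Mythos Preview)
Your proof is correct and follows essentially the same route as the paper: partition time into finitely many intervals on which the $F$-norms fall below the threshold $\eps_1(E,L(E))$, and on each interval apply Lemma~\ref{L:stab2} with the NLS solution launched from $v(t_{k-1})$ as comparison. The only cosmetic difference is that the paper partitions $(0,\sup\Imax)$, proves the $L^4_tL^8_x$ bound there, and then invokes the blowup/scattering criterion of Proposition~\ref{P:lwp} to conclude, whereas you partition $[0,\infty)$ and use the existence part of Lemma~\ref{L:stab2} to extend $v$ interval by interval, deducing $\sup\Imax=\infty$ directly and then re-deriving the Cauchy-in-$H^1$ scattering argument by hand; these are the same argument organized slightly differently.
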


\begin{proof} By Proposition~\ref{P:lwp}, it suffices to show that
\[
\|v\|_{L_t^4 L_x^8((0,\sup\Imax)\times\R^4)} \leq C(E).
\]
To prove this, we will rely on Theorem~\ref{T:EC}, which guarantees that there exists a unique global solution $u$ to \eqref{nls} from data $\|u(t_0)\|_{H_x^1}\leq E$ and it satisfies
\[
\|u\|_{L_t^4 L_x^8(\R\times\R^4)}\leq L(E).
\]
Now let $\eps_1=\eps_1(E,L(E))$ be as in Lemma~\ref{L:stab2} and divide $(0,\sup\Imax)$ into finitely many intervals $\{I_j\}_{j=0}^J$ so that
\[
\|F\|_{L_t^{\frac{2p}{p-2}}W_x^{1,\infty}(I_j\times\R^4)} + \|F\|_{L_t^{\frac{4p}{p+2}}L_x^4(I_j\times\R^4)} \leq\eps_1
\]
for each $j$. Note that $J=J(E)$, so that it suffices to show
\[
\|v\|_{L_t^4 L_x^8(I_j\times\R^4)}\leq C(E)\qtq{for each}j.
\]
To this end, write $I_j=[t_j,t_{j+1}]$. Let $u$ be the solution to \eqref{nls} with initial data $v(t_j)$.  Then since $\|v(t_j)\|_{H_x^1}\leq E$, we have
\[
\|u\|_{L_t^4 L_x^8(I_j\times\R^4)}\leq\|u\|_{L_t^4 L_x^8(\R\times\R^4)}\leq L(E).
\]
Thus we are in a position to apply the stability result Lemma~\ref{L:stab2}, yielding
\[
\|v\|_{L_t^4 L_x^8(I_j\times\R^4)}\leq C(E,L(E)) = C(E),
\]
as needed. \end{proof}

\section{Energy bounds for the forced equation}\label{S:E}

In this section, we prove that suitable space-time bounds on the forcing term $F$ guarantee that the solution $v$ to the forced equation \eqref{fnls} obeys uniform energy bounds, and so one may invoke Proposition~\ref{P:v-scatter} to conclude that $v$ scatters in $H^1_x$.  The particular norm we rely on is
\begin{align}
\|F\|_{X(I)}& :=  \|\x^{\frac3p+\frac12} F\|_{L_t^{\frac{2p}{p-2}}L_x^\infty} +  \|\x^{\frac3p+\frac12}\nabla F\|_{L_t^{\frac{2p}{p-2}}L_x^\infty} + \|\x^{\frac{1}{p}}\nabla F\|_{L_t^{\frac{2p}{p-2}}L_x^4}\notag\\
& \quad+ \|F\|_{L_t^3 L_x^6} +\|F\|_{L_t^{\frac{4p}{p+2}}L_x^4} + \|F\|_{L_{t,x}^4},\label{X}
\end{align}
where all space-time norms are over $I\times\R^4$ and $p$ is large, but finite.  Note that for $F^\omega=e^{it\Delta}f^\omega$, Propositions~\ref{P:STE2} and \ref{P:STE} guarantee that $\|F^\omega\|_{X(\R)}<\infty$ almost surely, whenever $f\in H^s_{\rad}(\R^4)$ for $s>\frac56$ and $p$ is taken sufficiently large.

Our main result in this section is the following:

\begin{proposition}[Energy bounds]\label{P:energy-bds} Suppose that $F$ is a solution to $(i\partial_t+\Delta) F = 0$ satisfying
\begin{align}\label{3:30}
\|F\|_{X(\R)}+\|F\|_{L_t^\infty L_x^2(\R\times\R^4)}+\|F\|_{L_t^\infty L_x^4(\R\times\R^4)} < \infty.
\end{align}
Let $v_0\in H^1(\R^4)$ and let $v:\Imax\times\R^4\to\C$ be the maximal-lifespan solution to \eqref{fnls} with $v(0)=v_0$.  Then $\sup_{t\in\Imax}\|v(t)\|_{H_x^1} <\infty$.
\end{proposition}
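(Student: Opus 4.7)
I treat forward time; the backward case is symmetric. Since $v+F$ solves \eqref{nls}, conservation of mass for \eqref{nls} combined with $\|F\|_{L_t^\infty L_x^2}<\infty$ gives
\begin{equation*}
\|v\|_{L_t^\infty L_x^2(\Imax\times\R^4)}\le \|v_0\|_{L_x^2}+2\|F\|_{L_t^\infty L_x^2}<\infty,
\end{equation*}
so it suffices to bound $E[v(t)]$ uniformly on $\Imax$. The approach is a continuity (bootstrap) argument on an arbitrary compact $[0,T]\subset[0,\sup\Imax)$ for two coupled quantities: the energy $\mathcal E(T):=\sup_{t\in[0,T]}E[v(t)]^{1/2}$ and a Morawetz quantity $\mathcal M(T)$ provided by Lemma~\ref{P:Morawetz} with the convex weight $a(x)=\x$, so that $\mathcal M(T)^2$ is schematically $\int_0^T\!\!\int_{\R^4}\x^{-3}|\nabla v|^2+\x^{-1}|v|^4\,dx\,dt$. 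The bootstrap hypothesis is $\mathcal E(T)\le 2C_0$ and $\mathcal M(T)\le 2C_1$ with $C_0,C_1$ depending only on $E[v_0]$, $\|v_0\|_{L^2}$ and the quantity \eqref{3:30}.

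In the Morawetz step, Lemma~\ref{P:Morawetz} produces $\mathcal M(T)^2$ plus nonnegative contributions on the left, while the right-hand side consists of a boundary term of size $\mathcal E(T)\,\|v\|_{L_t^\infty L_x^2}$ plus spacetime error integrals driven by the nonlinearity mismatch $|v+F|^2(v+F)-|v|^2v$. Expanding this mismatch and applying H\"older, the $F$-factors go into the weighted norms from \eqref{X} (notably $\x^{1/2+3/p}F$ and $\x^{1/2+3/p}\nabla F$ in $L_t^{2p/(p-2)}L_x^\infty$), which pair cleanly against weighted pieces of $v$ and $\nabla v$ extracted from $\mathcal M(T)$; the remaining slack is absorbed by the uniform mass bound and by $\mathcal E(T)$. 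Taking $p$ large, H\"older in time converts the $L_t^{2p/(p-2)}$ inputs into $L_t^2$-pairings against $\mathcal M(T)$, closing $\mathcal M(T)\le C_1$ under the bootstrap.

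In the energy step, Lemma~\ref{P:energy} would be obtained by differentiating $E[v]$ along \eqref{fnls} and integrating by parts in time using $(i\partial_t+\Delta)F=0$; this produces an identity
\begin{equation*}
E[v(t)]-E[v_0]=(\text{boundary terms in $F$ at times $0,t$})+\int_0^t \mathcal R(v,F)\,ds,
\end{equation*}
where $\mathcal R(v,F)$ is polynomial in $v,\nabla v,F,\nabla F$ and vanishes for $F\equiv 0$. The boundary terms are harmless by \eqref{3:30} combined with mass and $\mathcal E(T)$. Each monomial in $\mathcal R$ is estimated exactly as in the Morawetz step; partitioning $[0,T]$ into a finite number of subintervals on which the weighted $F$-norms in $\|F\|_{X(\R)}$ are small and summing yields $\mathcal E(T)^2\le E[v_0]+C(C_1,\eqref{3:30})\cdot\|F\|_{X(\R)}^{\alpha}$ for some $\alpha>0$. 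Choosing $C_0$ large enough relative to $E[v_0]$ closes the bootstrap, and a standard continuity argument in $T$ extends the bounds to all of $[0,\sup\Imax)$.

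\textbf{Main obstacle.} The genuinely dangerous contributions in both $\mathcal R$ and the Morawetz error are those quadratic in $v$ and linear in $F$ (or $\nabla F$): smallness of $F$ alone cannot make them small, since each factor of $F$ can carry at most one small constant. One is forced to absorb one $\nabla v$ into the spatially weighted Morawetz norm $\mathcal M(T)$, place the remaining $v$ into the uniform mass bound, and put the $F$ or $\nabla F$ factor into the weighted $L_t^{2p/(p-2)}L_x^\infty$ norm from \eqref{X}. The precise weight exponents $\tfrac12+\tfrac3p$ and $\tfrac1p$ in the $X$-norm and the choice of convex $a(x)=\x$ in the Morawetz identity are calibrated exactly so that this H\"older triangle balances, both in the spatial weight and in the time integrability; closing the estimate really does require a \emph{spatially weighted} Morawetz bound rather than the unweighted output of the classical Lin--Strauss identity.
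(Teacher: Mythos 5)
Your proposal follows the same route as the paper: uniform mass control via mass conservation for $u=v+F$, a double bootstrap coupling the energy $\EE(T)=\sup_{[0,T]}E[v(t)]$ with a spatially weighted Morawetz quantity built from the convex weight $a(x)=\x$, and a partition of the time interval into finitely many pieces on which $\|F\|_{X(I_j)}$ is small; you also correctly identify the weighted $L_t^{2p/(p-2)}L_x^\infty$ norms of $F$ and $\nabla F$ as the inputs that pair against $\mathcal M$ and the mass bound. This is exactly the mechanism in the paper's Lemmas~\ref{P:Morawetz}, \ref{P:energy} and the proof of Proposition~\ref{P:energy-bds}.

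The one place your write-up is looser than the paper concerns how the two bounds are closed. You run a global bootstrap on $[0,T]$ with thresholds $C_0,C_1$ depending on \eqref{3:30}, claim $\mathcal M(T)\le 2C_1$ from the Morawetz estimate over the full interval (where $\|F\|_{X([0,T])}$ need not be small), and then sum the energy increment over the small subintervals. Because $C_1$ is itself of size $\mathcal E(T)^2\sim C_0^2$ through the term $\EE(T)\|F\|_X^{2p/(p-2)}$ in \eqref{E:Morawetz2}, feeding $\mathcal M(T)\le 2C_1$ back into the energy increment produces a term of order $\eta^{2p/(p+2)}C_0^2$, so the bootstrap closes only after $\eta$ is taken small relative to the fixed implicit constants; you should state this explicitly. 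The paper sidesteps this bookkeeping by applying \emph{both} the Morawetz and energy estimates interval-by-interval on $[t_j,T]$ with $T\in I_j$, so that all occurrences of $\|F\|_X$ are over a small window and one simply shows $E[v(t_{j+1})]\le E[v(t_j)]+\mathrm{const}$, iterating $J$ times. The two schemes are equivalent in outcome, but the localized version is cleaner and avoids the $C_0/C_1$ circularity. One small wording issue: in the energy step the integration by parts that uses $(i\partial_t+\Delta)F=0$ is a \emph{spatial} integration by parts (after substituting $\partial_t F=i\Delta F$); the ``boundary terms in time'' come from the separate exact time derivative $-\tfrac14\partial_t\int\bigl[|v+F|^4-|v|^4-|F|^4\bigr]dx$. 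Your statement mixes these two mechanisms; the computation itself is correct as described in the paper.
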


By time-reversal symmetry, it suffices to prove uniform energy bounds for $v$ on $[0, \sup\Imax)$.  For $0<T\in \Imax$, we define
\begin{equation}\label{bold-E}
\EE(T) = \sup_{t\in[0,T]}E[v(t)],
\end{equation}
where the energy $E[\cdot]$ is as in \eqref{def:E}.  We seek bounds on $\EE(T)$ that are uniform in $T$.  We will prove this using a double bootstrap argument involving both a Morawetz inequality for $v$ and control of the energy increment for $v$.

\begin{lemma}[Morawetz estimate]\label{P:Morawetz} Suppose $v:[0,T]\times\R^4\to\C$ is a solution to \eqref{fnls} satisfying the uniform mass bound
\begin{equation}\label{mass-bd}
\|v\|_{L_t^\infty L_x^2([0,T]\times\R^4)}\lesssim 1.
\end{equation}
Writing
\begin{equation}\label{def:A}
\begin{aligned}
A(T)&:=\| \x^{-\frac14}v\|_{L_{t,x}^4([0,T]\times\R^4)}^4 + \|\x^{-\frac32}v\|_{L_{t,x}^2([0,T]\times\R^4)}^2\\ & \quad +\|\x^{-\frac32}\nabla v\|_{L_{t,x}^2([0,T]\times\R^4)}^2,
\end{aligned}
\end{equation}
we have
\begin{equation}\label{E:Morawetz2}
\begin{aligned}
A(T)& \lesssim \EE(T)^{\frac12}+\EE(T)^{\frac12}\|F\|_{X([0,T])}^3+\EE(T)\|F\|_{X([0,T])}^{\frac{2p}{p-2}},
\end{aligned}
\end{equation}
where $\EE(\cdot)$ is as in \eqref{bold-E}.
\end{lemma}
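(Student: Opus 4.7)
\textbf{The plan} is to derive a Morawetz/virial identity for $v$ with weight $a(x)=\x$, then bound the nonlinear error terms arising from $F$ using $\|F\|_{X}$ and the energy $\EE(T)$. Set $M(t) := 2\int \Im(\bar v \nabla v)\cdot \nabla a\, dx$.  Since $|\nabla a|\leq 1$, Cauchy--Schwarz combined with \eqref{mass-bd} and $\|\nabla v\|_{L^2_x}^2\leq 2\EE(T)$ immediately gives $|M(t)|\lesssim \EE(T)^{1/2}$, uniformly in $t\in[0,T]$.  Differentiating in time and integrating by parts in $x$ using the equation for $v$ produces the standard identity
\[
\tfrac{d}{dt}M(t) = -\!\int\! \Delta^2 a\, |v|^2\, dx + 4\!\int\! a_{jk}\Re(\partial_j\bar v \,\partial_k v)\, dx + \!\int\! \Delta a\, |v|^4\, dx + \mathrm{Err}(t),
\]
where the first three terms are those that would arise if the nonlinearity were $|v|^2 v$, and $\mathrm{Err}(t)$ collects all contributions of $\mathcal{N}_0:=|v+F|^2(v+F)-|v|^2 v = 2|v|^2 F+v^2\bar F+2|F|^2 v+F^2\bar v+|F|^2 F$.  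The geometric bounds $a_{jk}(x)\xi_j\xi_k\gtrsim \x^{-3}|\xi|^2$, $\Delta a \sim \x^{-1}$, and $-\Delta^2 a\gtrsim \x^{-3}$, all valid for $a=\x$ in $\R^4$, ensure that the three explicit terms above, integrated in $t$, control $A(T)$ from below.  Together with the boundary bound $|M(T)|+|M(0)|\lesssim \EE(T)^{1/2}$ this yields
\[
A(T)\lesssim \EE(T)^{1/2} + \Big|\int_0^T\! \mathrm{Err}(t)\,dt\Big|.
\]

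\textbf{Bounding $\mathrm{Err}(t)$:} Using $|\nabla a|\leq 1$ and $|\Delta a|\lesssim \x^{-1}$, the integrand of $\mathrm{Err}(t)$ is pointwise dominated by terms of the form $|v|^k|F|^{3-k}|\nabla v|$ and $\x^{-1}|v|^{k+1}|F|^{3-k}$ for $k\in\{0,1,2\}$.  The cubic-in-$F$ contributions ($k=0$) are handled directly: placing $\nabla v$ in $L_t^\infty L_x^2$ (bounded by $\EE(T)^{1/2}$) and $v$ in $L_t^\infty L_x^2$ (bounded by \eqref{mass-bd}), and using $\|F\|_{L_t^3 L_x^6}\leq \|F\|_{X([0,T])}$, produces the $\EE(T)^{1/2}\|F\|_X^3$ contribution.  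The linear- and quadratic-in-$F$ terms ($k=1,2$) are controlled by H\"older-pairing the weighted $X$-bound $\|\x^{\frac12+\frac3p}F\|_{L_t^{2p/(p-2)}L_x^\infty}$ with $v$- and $\nabla v$-norms obtained by interpolating the $L_t^\infty L_x^2$ mass bound against the weighted $L_{t,x}^2$-pieces of $A(T)^{1/2}$.  The resulting estimates take the schematic shape $A(T)^\alpha\|F\|_X^\beta$ with $\alpha<1$ and $\beta<\tfrac{2p}{p-2}$; Young's inequality then splits each such term into $\eps A(T)$ (absorbed into the LHS) plus $C\,\EE(T)\|F\|_X^{2p/(p-2)}$.

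\textbf{Main obstacle:} The principal technical difficulty is the weight mismatch between $A(T)$, which carries the spatial weight $\x^{-3/2}$ on both $v$ and $\nabla v$, and $\|F\|_X$, which offers only the weight $\x^{\frac12+\frac3p}$ on $F$ in $L_t^{2p/(p-2)}L_x^\infty$; no single H\"older pairing cancels these weights.  One must therefore interpolate one of the $v$-factors simultaneously in its spatial weight and in its time exponent (using log-convexity between the mass bound and the $A(T)^{1/2}$ component), and then apply Young's inequality to absorb a small power of $A(T)$ back to the left-hand side.  This interpolation-plus-Young step is exactly what produces the exponent $\tfrac{2p}{p-2}$ on $\|F\|_X$ in the final bound \eqref{E:Morawetz2}.
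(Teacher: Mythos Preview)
Your overall plan matches the paper's exactly: Morawetz identity with weight $a(x)=\x$, boundary control $|M|\lesssim\EE(T)^{1/2}$ via Cauchy--Schwarz and the mass bound, positivity of the unforced terms to recover $A(T)$, and then H\"older plus Young to handle the error terms coming from $\N=|v+F|^2(v+F)-|v|^2v$. The cubic-in-$F$ contribution is treated just as you say (the paper also invokes Hardy's inequality for the $\x^{-1}vF^3$ piece).

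There is, however, a concrete gap in your treatment of the $k=1,2$ terms. You propose to interpolate only the \emph{mass} bound against the weighted $L_{t,x}^2$ pieces of $A(T)$. With $F$ in $L_x^\infty$ and the three factors $v,v,\nabla v$ all in $L_x^2$-based spaces, the spatial H\"older exponents sum to $3/2$, not $1$, so the estimate cannot close. The paper fixes this by placing the two undifferentiated factors of $v$ in the $\|\x^{-1/4}v\|_{L_{t,x}^4}$ piece of $A(T)$; that term is in the definition of $A(T)$ precisely for this purpose. Second, your schematic $A(T)^\alpha\|F\|_X^\beta$ cannot produce $\EE(T)\|F\|_X^{2p/(p-2)}$ after Young, since Young does not manufacture powers of $\EE$. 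The $\EE$ factor must already be present: it enters because $\nabla v$ is interpolated between the \emph{energy} bound $\|\nabla v\|_{L_t^\infty L_x^2}\lesssim\EE(T)^{1/2}$ and $\|\x^{-3/2}\nabla v\|_{L_{t,x}^2}\lesssim A(T)^{1/2}$, giving
\[
\|\x^{-3/p}\nabla v\|_{L_t^p L_x^2}\lesssim \EE(T)^{\frac12-\frac1p}A(T)^{\frac1p}.
\]
Pairing this with two factors of $\|\x^{-1/4}v\|_{L_{t,x}^4}$ and with $\|\x^{1/2+3/p}F\|_{L_t^{2p/(p-2)}L_x^\infty}$ yields $\EE(T)^{1/2-1/p}A(T)^{1/2+1/p}\|F\|_X$; Young with conjugate exponents $\tfrac{2p}{p+2},\tfrac{2p}{p-2}$ then gives exactly $\eps A(T)+C\,\EE(T)\|F\|_X^{2p/(p-2)}$. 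The $\x^{-1}Fv^3$ term is handled analogously using $\|v\|_{L_t^\infty L_x^4}\lesssim\EE(T)^{1/4}$ together with both weighted pieces of $A(T)$ and $\|F\|_{L_t^3L_x^6}$; after Young this contributes to the $\EE(T)^{1/2}\|F\|_X^3$ term.
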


\begin{proof}
We write \eqref{fnls} in the following form:
\[
(i\partial_t+\Delta) v = |v|^2 v + \N,\qtq{where} \N := |v+F|^2(v+F)-|v|^2 v.
\]

Given a weight $a=a(x)$, we define the standard Morawetz action
\[
m(t) =2\Im\int a_k(x) v_k(t,x) \bar v(t,x)\,dx,
\]
where subscripts denote derivatives and repeated indices are summed.  A direct computation using the equation and integration by parts leads to the Morawetz identity
\begin{align*}
\dot m(t) = \int -\Delta\Delta a|v|^2 + 4\Re a_{jk} \bar v_j v_k + \Delta a|v|^4 + 4a_k\Re\{\bar\N v_k\} + 2\Delta a\Re \{\bar v \N\}\,dx.
\end{align*}

For the weight $a(x)=\x$, one has
\[
\nabla a = \tfrac{x}{\x}, \quad a_{jk} = \tfrac{\delta_{jk}}{\x}-\tfrac{x_jx_k}{\x^3},\quad \Delta a = \tfrac{3}{\x} + \tfrac{1}{\x^3}, \quad -\Delta\Delta a = \tfrac{3}{\x^3}+\tfrac{6}{\x^5}+\tfrac{15}{\x^7}.
\]
Using Cauchy-Schwarz and \eqref{mass-bd}, we see that
\begin{equation}\label{mor-bd}
\|m\|_{L_t^\infty([0,T])}\lesssim \EE(T)^{\frac12}.
\end{equation}
Noting that
\[
\Re a_{jk} \bar v_j v_k \geq \x^{-3}|\nabla v|^2,
\]
we apply the Morawetz identity and the fundamental theorem of calculus to obtain
\begin{equation}\label{E:Morawetz1}
\begin{aligned}
&A(T) \lesssim \EE(T)^{\frac12} + \|\N\nabla v\|_{L_{t,x}^1([0,T]\times\R^4)}+ \|\x^{-1}\N v\|_{L_{t,x}^1([0,T]\times\R^4)}.
\end{aligned}
\end{equation}

To estimate the last two terms, we first note that by H\"older's inequality,
\begin{equation}\label{interpolate-mor}
\|\x^{-\frac3p}\nabla v\|_{L_t^p L_x^2} \lesssim \|\nabla v\|_{L_t^\infty L_x^2}^{1-\frac2p}\|\x^{-\frac32}\nabla v\|_{L_{t,x}^2}^{\frac2p}
\lesssim \EE(T)^{\frac12-\frac1p}A(T)^{\frac1p}
\end{equation}
for any $2\leq p\leq\infty$.  Using also that
\[
\N = |v+F|^2(v+F)-|v|^2 v = \mathcal{O}\bigl( Fv^2+ F^3\bigr),
\]
together with H\"older's and Hardy's inequalities, we estimate
\begin{align*}
 \|&\N\nabla v\|_{L_{t,x}^1}+ \|\x^{-1}\N v\|_{L_{t,x}^1} \\ & \lesssim \|\x^{-\frac3p}\nabla v\|_{L_t^p L_x^2} \|\x^{-\frac14}v\|_{L_{t,x}^4}^2 \|\x^{\frac3p+\frac12}F\|_{L_t^{\frac{2p}{p-2}}L_x^\infty} \\
& \quad + \|\x^{-\frac14}v\|_{L_{t,x}^4}^2\|\x^{-\frac32}v\|_{L_{t,x}^2}^{\frac13}\|v\|_{L_t^\infty L_x^4}^{\frac23}\|F\|_{L_t^3 L_x^6} + \|\nabla v\|_{L_t^\infty L_x^2} \|F\|_{L_t^3 L_x^6}^3 \\
& \lesssim \EE(T)^{\frac12-\frac1p}A(T)^{\frac12+\frac1p}\|F\|_{X([0,T])}+\EE(T)^{\frac16}A(T)^{\frac23}\|F\|_{X([0,T])} + \EE(T)^{\frac12}\|F\|_{X([0,T])}^3.
\end{align*}
Continuing from \eqref{E:Morawetz1} and using Young's inequality to absorb $A(T)$ into the left-hand side, we deduce \eqref{E:Morawetz2}.\end{proof}

\begin{lemma}[Energy increment]\label{P:energy}
Suppose $v:[0,T]\times\R^4\to\C$ is a solution to \eqref{fnls}. Then
\begin{equation}\label{E:E}
\begin{aligned}
\EE(T) & \lesssim E[v(0)] + \|F\|_{L_t^\infty L_x^4}^4 + A(T)\|F\|_{X([0,T])}^{\frac{2p}{p+2}} +A(T)^{\frac{p+4}{2(p+2)}}\|F\|_{X([0,T])}^{\frac{4p}{p+2}} \\
& \quad +A(T)^{\frac{2}{p+2}}\|F\|_{X([0,T])}^{\frac{6p}{p+2}} + A(T)^{\frac{4}{p+4}}\|F\|_{X([0,T])}^{\frac{4p}{p+4}} \\
& \quad + A(T)^{\frac{8}{3p+8}}\bigl[ \|F\|_{X([0,T])}^2\|F\|_{L_t^\infty L_x^4([0,T]\times\R^4)}\bigr]^{\frac{4p}{3p+8}},
\end{aligned}
\end{equation}
where $\EE(\cdot)$ is as in \eqref{bold-E} and $A(\cdot)$ is as in \eqref{def:A}.
\end{lemma}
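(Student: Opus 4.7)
The plan is to compute $\tfrac{d}{dt}E[v(t)]$ from the forced equation and integrate. Setting $\N := |v+F|^2(v+F)-|v|^2 v$, differentiation of \eqref{def:E} combined with an integration by parts and the observation that the $|v|^2 v$-nonlinearity alone preserves the energy yields the identity
\[
\tfrac{d}{dt}E[v(t)] = \Im\int_{\R^4}\bigl(\nabla\bar v\cdot\nabla\N+|v|^2\bar v\,\N\bigr)\,dx.
\]
Integrating on $[0,t]$ for $t\in[0,T]$ and taking a supremum reduces the problem to bounding the spacetime integral $\int_0^T\!\!\int_{\R^4}\bigl(|\nabla v||\nabla\N|+|v|^3|\N|\bigr)\,dx\,dt$ by the right-hand side of \eqref{E:E}.

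Expanding $\N=\mathcal{O}(Fv^2)+\mathcal{O}(F^2 v)+\mathcal{O}(F^3)$ and applying the product rule to $\nabla\N$, the integrand decomposes into a finite sum of terms of the schematic forms $|F||v||\nabla v|^2$, $|v|^2|\nabla F||\nabla v|$, $|F|^2|\nabla v|^2$, $|F||v||\nabla F||\nabla v|$, $|F|^2|\nabla F||\nabla v|$, $|v|^5|F|$, $|v|^4|F|^2$, and $|v|^3|F|^3$. I bound each by H\"older's inequality in both space and time, distributing a weight $\langle x\rangle^{a}$ between the factors so that the $v$- and $\nabla v$-factors are controlled by $A(T)$ and $\EE(T)$, while the $F$- and $\nabla F$-factors are controlled by the components of $\|F\|_{X([0,T])}$. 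The workhorse interpolation is \eqref{interpolate-mor}, namely $\|\langle x\rangle^{-3/p}\nabla v\|_{L_t^p L_x^2}\lesssim \EE(T)^{\frac12-\frac1p}A(T)^{\frac1p}$, together with its $v$-analogue obtained by interpolating $\|\langle x\rangle^{-1/4}v\|_{L^4_{t,x}}$, $\|\langle x\rangle^{-3/2}v\|_{L^2_{t,x}}$ (both from $A(T)$) and $\|v\|_{L_t^\infty L_x^4}\lesssim \EE(T)^{1/4}$ (from Sobolev embedding in $\R^4$).

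Each individual bound comes out in the form $\EE(T)^{\gamma}A(T)^{\alpha}\|F\|_{X([0,T])}^{\beta}$ (with an extra factor of $\|F\|_{L_t^\infty L_x^4}^{\delta}$ in one case) with $0\leq\gamma<1$. Young's inequality $\EE^{\gamma}B\leq\epsilon\,\EE+C_\epsilon B^{1/(1-\gamma)}$ then absorbs the $\EE$-power into the left-hand side and promotes $B^{1/(1-\gamma)}$ into one of the summands on the right-hand side of \eqref{E:E}. Matching produces the five $A(T)^{\alpha}\|F\|_X^{\beta}$ contributions; the mixed $[\|F\|_X^2\|F\|_{L_t^\infty L_x^4}]^{4p/(3p+8)}$ summand arises from the $|v|^4|F|^2$ estimate after using Young's to split off a single $F$-factor kept in $\|F\|_{L_t^\infty L_x^4}$, while the standalone $\|F\|_{L_t^\infty L_x^4}^4$ term accommodates the quartic contribution to $\tfrac14\|v(T)\|_{L^4_x}^4$ after the crude decomposition $\|v(T)\|_{L^4}^4\lesssim \|v(T)+F(T)\|_{L^4}^4+\|F(T)\|_{L^4}^4$ together with the mass bound for $u=v+F$ established in \eqref{mass v}.

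The principal obstacle is the algebraic bookkeeping of the exponents: arranging each H\"older split so that the resulting $A$-exponent lies in $\{1,\tfrac{p+4}{2(p+2)},\tfrac{2}{p+2},\tfrac{4}{p+4},\tfrac{8}{3p+8}\}$ and the companion $\|F\|_X$-exponent matches \eqref{E:E}. The tightest term is the quintic $|v|^5|F|$, because in $\R^4$ Sobolev embedding yields only $\|v\|_{L_t^\infty L_x^4}\lesssim \EE^{1/4}$ and no higher spatial $L^p$-control on $v$, forcing this estimate to rely on a fractional interpolation $\|\langle x\rangle^{-a}v\|_{L_t^q L_x^4}$ (obtained by interpolating $\|\langle x\rangle^{-1/4}v\|_{L^4_{t,x}}$ against $\|v\|_{L_t^\infty L_x^4}$) paired with the weighted $L_t^{2p/(p-2)}L_x^\infty$ bound for $F$ from $\|F\|_X$; this is where the characteristic denominators $3p+8$ in the final summand emerge.
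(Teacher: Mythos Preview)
Your identity $\tfrac{d}{dt}E[v]=\Im\int(\nabla\bar v\cdot\nabla\N+|v|^2\bar v\,\N)\,dx$ is correct, but the resulting spacetime terms cannot all be closed with the norms at hand.  After expanding $\N=\mathcal O(Fv^2+F^2v+F^3)$, the $\nabla\bar v\cdot\nabla\N$ piece produces a term of the form $|F|\,|v|\,|\nabla v|^2$ (from $\nabla(Fv^2)\supset Fv\nabla v$), and the $|v|^2\bar v\,\N$ piece produces $|v|^5|F|$.  Neither can be estimated.  In $\R^4$ the energy and Sobolev give only $\nabla v\in L^2_x$ and $v\in L^r_x$ for $2\le r\le 4$; thus $|\nabla v|^2$ lies in $L^1_x$ at best, forcing $|F||v|\in L^\infty_x$, which is unavailable.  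Likewise $|v|^5$ would require $v\in L^5_x$ (if $F$ is placed in $L^\infty_x$) or better, and no amount of interpolation among $A(T)$ and $\EE(T)$ reaches that.  Your remark that the $|v|^5|F|$ term is ``tight'' and that the $3p+8$ denominator originates there is therefore misdirected; in the paper that denominator comes from an entirely different term.

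The paper's proof avoids these obstructions by a structural rewriting rather than a direct expansion.  Using $\partial_t|z|^4=4\Re[|z|^2z\,\partial_t\bar z]$ one writes
\[
\partial_t E[v]=-\tfrac14\,\partial_t\!\int\bigl[|v+F|^4-|v|^4-|F|^4\bigr]\,dx
+\Re\int\bigl[\overline{G(v+F)}-\overline{G(F)}\bigr]\partial_t F\,dx,
\]
and then exploits the free equation $(i\partial_t+\Delta)F=0$ to replace $\partial_t F$ by $i\Delta F$ and integrate by parts, yielding $\Im\int\nabla F\cdot\nabla[G(v+F)-G(F)]\,dx$.  The effect is twofold: the $|v|^5$-type terms vanish because $G(v+F)-G(F)$ carries at most two factors of $v$ before differentiation, and one of the two derivatives now lands on $F$ rather than on $v$, so $|\nabla v|^2$ never appears.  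The five spacetime terms that remain are $v^2\nabla F\!\cdot\!\nabla v$, $vF\nabla F\!\cdot\!\nabla v$, $F^2\nabla F\!\cdot\!\nabla v$, $v^2|\nabla F|^2$, $vF|\nabla F|^2$, each of which closes using \eqref{interpolate-mor} together with the $L^\infty_x$ and $L^4_x$ control on $\nabla F$ built into $\|F\|_X$.  The boundary term contributes the $\|F\|_{L^\infty_tL^4_x}^4$ (via Young applied to $\EE^{1/4}\|F\|_{L^\infty_tL^4_x}^3+\EE^{3/4}\|F\|_{L^\infty_tL^4_x}$), not a mass argument for $u=v+F$ as you suggest.
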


\begin{proof} Set $G(z)=|z|^2 z$. A direct computation using \eqref{fnls} yields
\begin{align*}
\partial_t E[v(t)] &=-\Re\int [\partial_t \bar v] [G(v+F)-G(v)]\,dx \\
& =-\tfrac14\partial_t\int \bigl[|v+F|^4 - |v|^4-|F|^4]\,dx + \Re\int [\bar G(v+F)-\bar G(F)]\partial_t F\,dx,
\end{align*}
where in the last line we used the identity $\partial_t |z|^4 = 4\Re G(z)\partial_t \bar z$.  Recalling that $F$ solves $(i\partial_t+\Delta) F=0$, we continue from above and integrate by parts to get
\begin{align*}
\partial_t E[v(t)] = -\tfrac14\partial_t\! \int\bigl[v+F|^4\! -\! |v|^4\!-\!|F|^4]\,dx + \Im\int \nabla [\bar G(v+F)-\bar G(F)]\cdot\nabla F\,dx.
\end{align*}
In particular, by the fundamental theorem of calculus,
\begin{align*}
\EE[T] & \leq E[v(0)]+\bigl\|\, |v+F|^4 - |v|^4 - |F|^4\bigr\|_{L_t^\infty L_x^1([0,T]\times\R^4)}\\
&\quad +\bigl\| \nabla F\cdot \nabla\bigl[|v+F|^2(v+F)-|F|^2 F\bigr]\bigr\|_{L_{t,x}^1([0,T]\times\R^4)}.
\end{align*}

We first estimate the boundary terms:
\begin{align*}
\bigl\|\, |v+F|^4 - |v|^4 - |F|^4\bigr\|_{L_t^\infty L_x^1}&\lesssim \|v\|_{L_t^\infty L_x^4}\|F\|_{L_t^\infty L_x^4}^3 + \|v\|_{L_t^\infty L_x^4}^3 \|F\|_{L_t^\infty L_x^4} \\
& \lesssim \EE(T)^{\frac14}\|F\|_{L_t^\infty L_x^4}^3 + \EE(T)^{\frac34}\|F\|_{L_t^\infty L_x^4}.
\end{align*}

Distributing the derivative in the remaining term, we are led to estimate five terms.  Using H\"older's inequality and \eqref{interpolate-mor}, we obtain
\begin{align*}
\|v^2\nabla F\cdot\nabla v\|_{L_{t,x}^1} & \lesssim \|\x^{-\frac3p}\nabla v\|_{L_t^p L_x^2} \|\x^{-\frac14}v\|_{L_{t,x}^4}^2 \|\x^{\frac3p+\frac12}\nabla F\|_{L_t^{\frac{2p}{p-2}}L_x^\infty}  \\
& \lesssim A(T)^{\frac12+\frac1p}\EE(T)^{\frac12-\frac1p}\|F\|_{X([0,T])}, \\
\| vF\nabla F\cdot\nabla v \|_{L_{t,x}^1} & \lesssim \|\x^{-\frac3p}\nabla v\|_{L_t^p L_x^2} \|\x^{-\frac14}v\|_{L_{t,x}^4} \|\x^{\frac14+\frac3p}\nabla F\|_{L_t^{\frac{2p}{p-2}}L_x^\infty} \|F\|_{L_{t,x}^4} \\
& \lesssim A(T)^{\frac14+\frac1p}\EE(T)^{\frac12-\frac1p}\|F\|_{X([0,T])}^2,\\
\|F^2\nabla F \cdot\nabla v \|_{L_{t,x}^1}&\lesssim \|\x^{-\frac3p}\nabla v\|_{L_t^p L_x^2} \|F\|_{L_{t,x}^4}^2 \|\x^{\frac3p}\nabla F\|_{L_t^{\frac{2p}{p-2}}L_x^\infty} \\
& \lesssim A(T)^{\frac1p}\EE(T)^{\frac12-\frac1p}\|F\|_{X([0,T])}^3,\\
\|v^2|\nabla F|^2 \|_{L_{t,x}^1} & \lesssim \| \x^{-\frac14}v\|_{L_{t,x}^4}^{\frac8p}\|v\|_{L_t^\infty L_x^4}^{2-\frac8p} \|\x^{\frac1p}\nabla F\|_{L_t^{\frac{2p}{p-2}}L_x^4}^2 \\
& \lesssim A(T)^{\frac2p}\EE(T)^{\frac12-\frac2p}\|F\|_{X([0,T])}^2,\\
\|vF|\nabla F|^2  \|_{L_{t,x}^1} & \lesssim \|\x^{-\frac14}v\|_{L_{t,x}^4}^{\frac8p}\|v\|_{L_t^\infty L_x^4}^{1-\frac8p} \|F\|_{L_t^\infty L_x^4} \|\x^{\frac1p}\nabla F\|_{L_t^{\frac{2p}{p-2}}L_x^4}^2 \\
& \lesssim A(T)^{\frac2p} \EE(T)^{\frac14-\frac2p} \|F\|_{X([0,T])}^2\|F\|_{L_t^\infty L_x^4}.
\end{align*}
Collecting the estimates above and applying Young's inequality to absorb $\EE(T)$ into the left-hand side, we arrive at \eqref{E:E}.\end{proof}

We are now ready to present the proof of Proposition~\ref{P:energy-bds}.

\begin{proof}[Proof of Proposition~\ref{P:energy-bds}] First, as $F\in L_t^\infty L_x^2$, conservation of mass for \eqref{nls} implies that $v$ satisfies the mass bound \eqref{mass-bd}.

As remarked before, by time-reversal symmetry, it suffices to prove uniform energy bounds for $v$ on $[0, \sup \Imax)$.  To this end, let $0<\eta\ll1$ be a small parameter and subdivide $[0,\sup\Imax)$ into finitely many intervals $\{I_j\}_{j=0}^J$ so that
\begin{equation}\label{Fsmallj}
\|F\|_{X(I_j)} \leq \eta\qtq{for each}j.
\end{equation}
Inserting \eqref{Fsmallj} into \eqref{E:Morawetz2} and \eqref{E:E}, we find
\begin{align*}
A(T)&\lesssim \EE(T)^{\frac12}+\eta^{\frac{2p}{p-2}}\EE(T),\\
\EE(T)&\lesssim E[v(0)] + \|F\|_{L_t^\infty L_x^4}^4+ \eta^{\frac{2p}{p+2}}\bigl[ A(T) +A(T)^{\frac2{p+2}}\bigr] \\ & \quad + \bigl[\|F\|_{L_t^\infty L_x^4}\eta^2\bigr]^{\frac{4p}{3p+8}} A(T)^{\frac8{3p+8}},
\end{align*}
for all $0<T\in I_0$.  Recalling \eqref{3:30} and choosing $\eta$ sufficiently small, a continuity argument shows that $\EE$ can increase by at most a fixed constant on the interval $I_0$.  Repeating this argument on each $I_j$, we conclude that there exists a uniform energy bound on the forward maximal-lifespan of $v$.
\end{proof}

We now have all the pieces we need to complete the proof of our main result.

\begin{proof}[Proof of Theorem~\ref{T}] Let $s>\frac56$ and let $f\in H^s_{\rad}(\R^4)$ be radial.  Propositions~\ref{P:STE2} and Proposition~\ref{P:STE} guarantee that
\begin{equation}\label{F-good}
F^\omega:=e^{it\Delta} f^\omega \in X(\R)\cap L_t^\infty L_x^2(\R\times\R^4)\cap L_t^\infty L_x^4(\R\times\R^4)
\end{equation}
almost surely, where $X(\R)$ is as in \eqref{X}.

Now fix $\omega$ such that \eqref{F-good} holds.  Writing $u=F^\omega +v$, we see that $u$ solving \eqref{nls} with $u(0)=f^\omega$ is equivalent with $v$ solving \eqref{fnls} with $v(0)=0$.  By Proposition~\ref{P:energy-bds}, $v$ is uniformly bounded in $H^1_x$ throughout its maximal lifespan.  Thus, by Proposition~\ref{P:v-scatter}, $v$ is a global solution to \eqref{fnls} and scatters in $H^1_x$, which gives Theorem~\ref{T}.\end{proof}


\end{document}